\theoremstyle{plain}
\newtheorem{teo}{Theorem}[section]
\newtheorem*{theorem*}{Theorem}
\newtheorem{lem}[teo]{Lemma}
\newtheorem{coro}[teo]{Corollary}
\newtheorem{prop}[teo]{Proposition}
\newtheorem{defi}[teo]{Definition}
\theoremstyle{remark}
\numberwithin{equation}{section}
\newcommand{\quadro}{\hfill\vrule height .9ex width .8ex depth -.1ex}
\newcommand{\NN}{\Bbb{N}}
\newcommand{\ZZ}{\Bbb{Z}}
\newcommand{\RR}{\Bbb{R}}
\newcommand{\CC}{\Bbb{C}}
\newcommand{\nep}{{\rm{e}}}
\newcommand{\CZ}{Calder\'on--Zygmund }
\newcommand{\D}{\Delta} %Laplaciano invariante sinistro
\newcommand{\jl}{j_{\ell}}
\newcommand{\di}{\,{\rm{d}}}
\newcommand{\dir}{\,{\rm{d}}\rho}  %misura destra dirho
\newcommand{\dil}{\,{\rm{d}}\lambda}  %misura sinistra dilambda
\begin{document}
\title[Spaces $H^1$ and $BMO$ on $ax+b$--groups]{Spaces $H^1$ and $BMO$ on $ax+b$--groups}

\subjclass[2000]{ 22E30, 42B20, 42B30, 46B70}

\keywords{Hardy space, BMO, singular integrals, interpolation}

\thanks{{\bf Acknowledgement.} Work partially supported by the
Italian Progetto cofinanziato ``Analisi Armonica'' 2006--2008.}

\author[M. Vallarino]
{Maria Vallarino}

\address{Maria Vallarino:
Dipartimento di Matematica e Applicazioni
\\ Universit\`a di Milano-Bicocca\\
via R.~Cozzi 53\\ 20125 Milano\\ Italy}
\address{e-mail address: maria.vallarino@unimib.it}

\begin{abstract}
Let $S$ be the group $\RR^d\ltimes \RR^+$ endowed with the Riemannian symmetric space metric $d$ and the right Haar measure $\rho$. The space $(S,d,\rho)$ is a Lie group of exponential growth. In this paper we define an Hardy space $H^1$ and a $BMO$ space in this context. We prove that the functions in $BMO$ satisfy the John--Nirenberg inequality and that $BMO$ may be identified with the dual space of $H^1$. We then prove that singular integral operators whose kernels satisfy a suitable integral H\"ormander condition are bounded from $H^1$ to $L^1$ and from $L^{\infty}$ to $BMO$. We also study the real interpolation between $H^1$, $BMO$ and the $L^p$ spaces.
\end{abstract}

\maketitle
\section{Introduction}
Let $S$ be the group $\RR^d\ltimes \RR^+$ endowed with the product
$$(x,a)\cdot (x',a')=(x+a\,x',a\,a')\qquad\forall (x,a),\,(x',a')\in S\,.$$

We call $S$ an {\emph{$ax+b$-group}}. We endow $S$ with the 
left-invariant Riemannian metric $ds^2=a^{-2}(\di x^2+\di a^2)$. 
We denote by $d$ the corresponding metric, which is that of the 
$(d+1)$-dimensional hyperbolic space.

The group $S$ is nonunimodular; the right and left Haar measures 
are given respectively by
$$\dir(x,a)=a^{-1}\di x\di a\qquad {\rm{and}}\qquad \dil(x,a)=a^{-(d+1)}\di x\di a\,.$$
It is well known that the measure of the ball $B_r$ centred at the 
identity and of radius $r$, behaves like
$$\rho(B_r)=\lambda(B_r)\sim \begin{cases}
r^{d+1}&{\rm{if~}}r<1\\
\nep^{dr}&{\rm{if~}}r\geq 1\,.
\end{cases}
$$
This shows that the space $(S,d,\rho)$ is of {\emph{exponential growth}}. 
Throughout this paper, unless explicitly stated, we consider the right 
measure $\rho$ on $S$ and we denote by $L^p$ the space $L^p(\rho)$ 
and by $\|\cdot\|_p$ the norm in this space, for all $p$ in $[1,\infty] $. 

Harmonic analysis on the space $(S,d,\rho)$ has been the object 
of many investigations, mainly because it is an example of 
exponential growth group, where the classical theory of
singular integral operators does not hold 
(see \cite{CGHM, GQS, GS1, GS, HS, MT}). In this context maximal operators, 
singular integrals and multiplier operators associated with a 
distinguished Laplacian have been studied. In particular, in the case 
when $d=1$, $S$ is the \emph{affine group of the real line}, 
where the theory of singular integrals have been considered by many authors. 

Recently W.~Hebisch and T.~Steger 
\cite{HS} adapted the classical {\emph{\CZ theory}} to the space $(S,d,\rho)$ 
and applied this theory to study singular integral operators 
in this context. The purpose of this paper is to develop a 
$H^1$--$BMO$ theory in the space $(S,d,\rho)$, which is a natural 
development of the \CZ theory introduced in \cite{HS} and 
which may be considered as an analogue of the classical theory. 

The classical $H^1$--$BMO$ theory holds in $(\RR^n,d,m)$, where $d$ is the 
euclidean metric and $m$ denotes the Lebesgue measure. 
In this context the spaces $H^1$ and $BMO$ are defined as 
in \cite{FeS, J, S} and satisfy the following properties:
\begin{itemize} 
\item[(i)] the space $BMO$ may be identified with the dual space of $H^1$;
\item[(ii)] the functions in $BMO$ satisfy the so-called 
John--Nirenberg inequality;
\item[(iii)] the \CZ operators are bounded 
from $H^1$ to $L^1$ and from $L^{\infty}$ to $BMO$;
\item[(iv)] the real interpolation spaces 
between $H^1$ and $BMO$ are the $L^p$ spaces (see \cite{FeS, H, Jo, P, RS}).
\end{itemize}
We recall that there are several characterizations of the Hardy space $H^1$ 
in the classical setting. In particular, an atomic 
definition and a maximal characterization of $H^1$ are available. 
The properties (i)-(iv) involving $H^1$ were proved by using both 
its maximal characterization and its atomic definition.

Extensions of the $H^1$--$BMO$ theory have been considered 
in the literature. In particular, a theory that parallels 
the euclidean theory has been developed in 
{\emph{spaces of homegeneous type}}. A space of homogeneous type 
is a measured metric space $(X,d,\mu)$ where the 
doubling condition is satisfied, i.e., there exists
a constant $C$ such that
\begin{equation}\label{doubling}
\mu\bigl(B(x,2r)\bigr)
\leq C\, \mu\bigl(B(x,r)\bigr)
\qquad\forall x \in X \qquad\forall r \in \RR^+.
\end{equation}
In the space $(X,d,\mu)$ a \CZ theory \cite{CW1,S} and 
a $H^1$--$BMO$ theory \cite{CW2,FS} have been studied. 
This theory is a generalization of the euclidean one; 
in particular properties (i)-(iv) are satisfied.

It is natural to ask whether it is possible to develop a 
$H^1$--$BMO$ theory in spaces which do not satisfy the doubling condition 
(\ref{doubling}). 
This was done in the space $(R^n,d,\mu)$, where $d$ is the euclidean metric 
and $\mu$ is a (possibly nondoubling) measure, which grows polinomially 
at infinity \cite{MMNO, NTV, T}. A space $BMO$ was also 
introduced by A.~Ionescu in symmetric spaces of the noncompact type 
and rank one: note that the $BMO$ theory developed 
in \cite{I} applies to the space $(S,d)$ endowed with the 
Riemannian measure, i.e., the left Haar measure $\lambda$,   
but does not apply to the space $(S,d,\rho)$, 
which we are considering in this paper.

G.~Mauceri and S.~Meda \cite{MM} introduced a $H^1$--$BMO$ theory 
in the space $(\RR^n,d,\gamma)$, where $d$ is the euclidean metric and 
$\gamma$ is the Gauss measure, and applied this theory to study 
appropriate operators related to the Ornstein-Uhlenbeck operator.

In this paper we develop a $H^1$--$BMO$ theory in the space $(S,d,\rho)$ 
defined above. The starting point is the \CZ theory introduced 
in \cite{HS}. There exists a family of appropriate sets in $S$, 
which are called {\emph{\CZ sets}}, which replaces the family 
of balls in the classical \CZ theory. 

For each $p$ in $(1,\infty]$, we define an \emph{atomic Hardy space} 
$H^{1,p}$.  Atoms are functions supported in \CZ sets, with 
vanishing integral and satisfying a certain size condition. 
An important feature of the classical theory is that
all the spaces $H^{1,p}$, for $p$ in $(1, \infty]$, are equivalent. We shall prove that
this holds also in our setting. We define a space of \emph{functions of bounded mean oscillation} $BMO$, whose definition is analogue 
to the classical one, where balls are replaced by \CZ sets. 
We shall prove that the John--Nirenberg inequality is satisfied and that 
$BMO$ may be identified with the dual space of $H^1$.

Further, we show that a singular integral operator, whose kernel satisfies an 
integral H\"ormander condition, extends to a bounded
operator from $H^1$ to $L^1$ and from $L^{\infty}$ to $BMO$. 
As a consequence of this result, we show that spectral multipliers
of a distinguished Laplacian $\D$ extend to bounded operators from $H^1$ to
$L^1$ and from $L^\infty$ to $BMO$.

Finally, we find the real interpolation spaces between $H^1$ and $L^p$, 
$L^p$ and $BMO$, $H^1$ and $BMO$, for $p$ in $(1,\infty)$. 
The interpolation results which we prove are the analogues 
of the classical ones \cite{H,Jo,P,RS}, but the proofs are different. 
Indeed, in the classical setting the maximal characterization 
of the Hardy space is used to obtain the interpolation results, 
while the Hardy space $H^1$ introduced in this paper has only an atomic definition.

%\smallskip
%Our paper is organized as follows: in Section \ref{H1} we define the atomic Hardy spaces $H^{1,p}$, where $1<p\leq \infty$, and prove that they are all equivalent. In Section \ref{BMO} we define the space $BMO$ and we prove that its functions satisfy the John--Nirenberg inequality. Then we prove that $BMO$ may be identified with the dual space of $H^1$. In Section \ref{singularintegrals} we prove that a class of singular integrals operators whose kernels satisfy a suitable integral H\"ormander condition are bounded from $H^1$ to $L^1$ and from $L^{\infty}$ to $BMO$; then we apply this result to a class of multipliers of a distinguished laplacian $\Delta$ on $S$. In Section \ref{interpolation} we prove real interpolation results involving the spaces $H^1$, $BMO$ and $L^p$.

%\smallskip
Positive constants are denoted by $C$; these may differ from one line to another, and may depend on any quantifiers written, implicitly or explicitly, before the relevant formula. Given two quantities $f$ and $g$, by $f\sim g$ we mean that there exists a constant $C$ such that $1/C\leq f/g\leq C$.

\smallskip
The author would like to thank Nicolas Varopoulos for his help and encouragement.
 
\section{The Hardy space}\label{H1}
In this section, we give the definition of  the Hardy space on $S$, 
where the \CZ sets are involved. Let us recall the 
definition of \CZ sets which appears in \cite{HS} and implicitly in \cite{GS}.
\begin{defi}\label{Czsets}
A \emph{\CZ set} is a set $R=Q\times [a\nep^{-r},a\nep^r]$, where 
$Q$ is a dyadic cube in $\RR^d$ of side $L$, $a\in\RR^+$, $r>0$ and
$$\nep^{2}a\,r\leq L< \nep^{8 }a\,r\qquad{\rm{if~}}r<1\,,$$
$$a\,\nep^{2r}\leq L< a\,\nep^{8r}\qquad{\rm{if~}}r\geq 1\,.$$
\end{defi}
Let $\mathcal R$ denote the family of all \CZ sets. 

In \cite{HS} the authors proved that the space $(S,d,\rho)$ is a 
\CZ space with \CZ constant $\kappa_0$. More precisely, they proved 
that the following hold:
\begin{itemize}
\item[(i)] for every set $R$ in $\mathcal R$ there exist a 
point $x_R$ and a positive number $r_R$ such that $R\subseteq B(x_R,\kappa_0\,r_R)$;
\item[(ii)] for every set $R$ in $\mathcal R$ its dilated set is defined as 
$R^*=\{x\in S:~d(x,R)<r_R\}$; its right measure satisfies the following inequality:
$$\rho(R^*)\leq \kappa_0\,\rho(R)\,;$$
\item[(iii)] for every set $R$ in $\mathcal R$ there exist 
mutually disjoint sets $R_1,\ldots,R_k$ in $\mathcal R$, with $2\leq k\leq 2^d$, 
such that $R=\bigcup_{i=1}^kR_i$ and $\rho(R_i)=\rho(R)/k$, for $i=1,\ldots,k$. 
\end{itemize}
For any integrable function $f$ and for any $\alpha>0$, $f$ admits a 
\CZ decomposition at level $\alpha$, i.e., a decomposition $f=g+\sum_ib_i$, where $g$ is 
bounded almost everywhere by $\kappa_0 \,\alpha$ and the functions $b_i$ have 
vanishing integral and are supported in \CZ sets $R_i$. The average of 
$|f|$ on each set $R_i$ is comparable with $\alpha$ 
(see \cite[Definition 1.1]{HS} for the details).

Suppose that $p$ is in $(1,\infty]$. By replacing balls with \CZ sets in the classical definition of atoms, 
we say that a function $a$ is a {\emph{$(1,p)$-atom}} if it satisfies the following properties:
\begin{itemize}
\item [(i)] $a$ is supported in a \CZ set $R$;
\item [(ii)]$\|a\|_p\leq \rho (R)^{1/p-1}\,;$ 
\item [(iii)]$\int a\dir =0$\,.
\end{itemize}
Observe that a $(1,p)$-atom is in $L^1$ and it is normalized 
in such a way that its $L^1$-norm does not exceed $1$. 
\begin{defi}
The Hardy space $H^{1,p}$ is the space of all functions $h$ in $ L^1$ 
such that $h=\sum_j \lambda_j\, a_j$, where $a_j$ are $(1,p)$-atoms and $\lambda _j$ 
are complex numbers such that $\sum _j |\lambda _j|<\infty$. We denote by $\|h\|_{H^{1,p}}$ 
the infimum of $\sum_j|\lambda_j|$ over such decompositions. 
\end{defi}
The space $H^{1,p}$ endowed with the norm $\|\cdot\|_{H^{1,p}}$ is a Banach space.

For any $p$ in $ (1,\infty]$ we denote by $H^{1,p}_{\rm{fin}}$ the vector space of all finite linear combinations of $(1,p)$-atoms. Clearly, $H^{1,p}_{\rm{fin}}$ is dense in $H^{1,p}$.

It easily follows from the above definitions that $H^{1,\infty}\subseteq H^{1,p}$,
whenever $p$ is in $ (1, \infty)$. Actually the following theorem holds.
\begin{teo}\label{coincidono}
For any $p$ in $ (1,\infty)$, the spaces $H^{1,p}$ and $H^{1,\infty}$ coincide 
and their norms are equivalent. %We denote by $H^1$ the space 
%$H^{1,\infty}$ and we define $\|\cdot\|_{H^1}=\|\cdot\|_{1,\infty}$. 
\end{teo}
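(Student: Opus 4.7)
The easy direction is $\|h\|_{H^{1,p}}\le\|h\|_{H^{1,\infty}}$: any $(1,\infty)$-atom $a$ supported in a \CZ set $R$ satisfies $\|a\|_p\le\rho(R)^{1/p}\|a\|_\infty\le\rho(R)^{1/p-1}$, so it is a $(1,p)$-atom. The content of the theorem is the reverse inclusion $H^{1,p}\subseteq H^{1,\infty}$ with comparable norms, and by linearity and density in $H^{1,p}_{\rm fin}$ it is enough to exhibit a constant $C=C(p,\kappa_0)$ such that every $(1,p)$-atom $a$ admits a decomposition $a=\sum_k\mu_k\,a_k$ into $(1,\infty)$-atoms with $\sum_k|\mu_k|\le C$.

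Fix a $(1,p)$-atom $a$ supported in a \CZ set $R$. The strategy is the standard Calder\'on--Zygmund stopping-time construction, here implemented with the \CZ decomposition recalled from \cite{HS}. Set $\alpha_n=c\,2^n\,\rho(R)^{-1}$ for $n\ge 0$ (with $c$ a suitably small absolute constant) and write
$$a=g_n+\sum_i b_i^n,$$
where $\|g_n\|_\infty\le\kappa_0\alpha_n$, each $b_i^n$ is supported in a \CZ set $R_i^n$ with $\int b_i^n\dir=0$, and $\rho(R_i^n)^{-1}\int_{R_i^n}|a|\dir\sim\alpha_n$. Since $|a|<\infty$ a.e.\ and $\alpha_n\to\infty$, dominated convergence gives $g_n\to a$ in $L^1$, so
$$a=g_0+\sum_{n=0}^{\infty}(g_{n+1}-g_n).$$
On each $R_i^n$ the restriction of $g_{n+1}-g_n$ corrected by its mean produces a function $h_i^n$ supported in $R_i^n$, of vanishing integral, with $\|h_i^n\|_\infty\le C\alpha_{n+1}$. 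Setting $a_i^n:=h_i^n/(C\alpha_{n+1}\rho(R_i^n))$ yields a $(1,\infty)$-atom, and $g_{n+1}-g_n=\sum_i\mu_i^n\,a_i^n$ with $|\mu_i^n|\le C\alpha_{n+1}\rho(R_i^n)$.

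The decisive point is the estimate of $\sum_{n,i}|\mu_i^n|$, where the $L^p$ hypothesis enters. H\"older's inequality applied to $\alpha_n\rho(R_i^n)\le\int_{R_i^n}|a|\dir$ gives $\alpha_n^p\rho(R_i^n)\le\int_{R_i^n}|a|^p\dir$; because the $R_i^n$ are pairwise disjoint at a fixed level $n$,
$$\sum_i\rho(R_i^n)\le\alpha_n^{-p}\|a\|_p^p\le\alpha_n^{-p}\rho(R)^{1-p}.$$
Hence $\sum_i|\mu_i^n|\le C\alpha_{n+1}\alpha_n^{-p}\rho(R)^{1-p}\sim 2^{n(1-p)}$, and since $p>1$ the geometric series in $n$ converges to a constant depending only on $p$ and $\kappa_0$. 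The leftover term $g_0$ is bounded by $\kappa_0\alpha_0\sim\rho(R)^{-1}$, has vanishing integral (since both $a$ and the $b_i^0$ do), and the stopping-time construction confines its support to a bounded dilate of $R$, so it is itself a bounded multiple of a single $(1,\infty)$-atom.

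The main obstacle I anticipate is the geometric localization step: ensuring that the bad sets $R_i^n$, and hence the supports of $g_0$ and of the pieces $g_{n+1}-g_n$, stay in a controlled neighbourhood of $R$, so that $g_0$ genuinely concentrates on a single \CZ set and the atomic accounting above is legitimate. This reduces to a careful reading of the stopping-time construction in \cite{HS} together with property (ii) of \CZ sets (boundedness of dilations in $\rho$), and is the step where the non-doubling geometry of $(S,d,\rho)$ makes the argument more delicate than its classical counterpart.
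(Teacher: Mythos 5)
Your overall scheme (easy inclusion, telescoping Calder\'on--Zygmund decompositions at heights $\alpha_n\sim 2^n\rho(R)^{-1}$, and the coefficient count $\sum_i\rho(R_i^n)\le\alpha_n^{-p}\|a\|_p^p$ giving a convergent series $\sum_n 2^{n(1-p)}$) is coherent, but there is a genuine gap exactly at the point you flag, and it does not ``reduce to a careful reading of the stopping-time construction'' plus property (ii). Your remainder $g_0$, and the bad sets $R_i^0$ it involves, come from a \emph{global} \CZ decomposition of $a$ at level $\alpha_0\sim\rho(R)^{-1}$: all you know is that each $R_i^0$ is a \CZ set which meets $R$ and has $\rho(R_i^0)\lesssim\rho(R)$. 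Nothing recalled in the paper controls where such a set sits relative to $R$ (property (ii) only bounds $\rho(R^*)$ for the \emph{same} set $R$), and in the exponential-growth geometry of $(S,d,\rho)$ a \CZ set of comparable measure meeting $R$ can a priori have a very different shape and location, so ``confined to a bounded dilate of $R$'' is an unproved geometric lemma, not a formality. Moreover, even granting it, a dilate $R^*$ is not a \CZ set: a $(1,\infty)$-atom must be supported in an actual \CZ set, so you would further need that $R\cup\bigcup_i R_i^0$ is contained in a single \CZ set of measure $\le C\rho(R)$ before $g_0$ can be declared a bounded multiple of one atom. A second unaddressed point: to write $g_{n+1}-g_n=\sum_i h_i^n$ with each $h_i^n$ supported in $R_i^n$ \emph{and of vanishing integral}, you need the level-$(n+1)$ stopping sets to be nested inside the level-$n$ ones; this forces you to build all the decompositions from one coherent subdivision scheme, which is not part of the single-level decomposition you invoke, and without it the mean-zero property of the pieces fails.

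This is also where your route diverges from the paper's. The paper proves Theorem \ref{coincidono} via Proposition \ref{atomo}, following Coifman--Weiss: it sets $b=\rho(R)\,a$ and runs the stopping time by repeatedly subdividing $R$ \emph{itself} using property (iii) of \CZ sets, with the stopping criterion on averages of $|b|^p$ at thresholds $\alpha^n$; every set produced is a subset of $R$, so the localization and nesting problems never arise, and the $L^p$ hypothesis enters through the geometric decay of $\sum_{j_n}\rho(R_{j_n})$ (property (vi)). Your telescoping-levels argument could perhaps be completed, but only after supplying the geometric containment lemma for \CZ sets and a compatible multi-level stopping-time construction --- precisely the machinery the paper's intrinsic subdivision of $R$ is designed to avoid. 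As written, the treatment of $g_0$ (and the cross-level nesting) is a gap.
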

To prove the Theorem \ref{coincidono} we follow the proof of 
\cite[Theorem A]{CW2}. We shall need the following preliminary result.
\begin{prop}\label{atomo}
Suppose that  $p$ is in $ (1,\infty)$ and $a$ is a $(1,p)$-atom. Then $a$ is in $ H^{1,\infty}$ and there exists a constant $C_{p}$, which depends only on $p$, such that
$$\|a\|_{H^{1,\infty}}\leq C_{p}\,.$$
\end{prop}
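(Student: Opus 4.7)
The plan is to adapt the classical Coifman--Weiss argument to the \CZ structure on $(S,d,\rho)$. Fix a $(1,p)$-atom $a$ supported in a \CZ set $R_0$, with $\int a\dir=0$ and $\|a\|_p\le\rho(R_0)^{1/p-1}$.

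First I would construct a nested family of \CZ decompositions of $a$ at geometrically increasing levels $\alpha_k=2^k/\rho(R_0)$, $k\ge 0$. Using the dyadic subdivision property (iii) of \CZ sets, one runs a stopping-time argument inside $R_0$: at level $\alpha_k$, select the maximal \CZ subsets of $R_0$ on which the average of $|a|$ first exceeds $\alpha_k$; call them $\{R_i^k\}_i$ and set $\Omega_k=\bigcup_i R_i^k$. Define $g_k$ to equal $a$ off $\Omega_k$ and the mean of $a$ on each $R_i^k$. The stopping conditions yield $\|g_k\|_\infty\le\kappa_0\alpha_k$, while the monotonicity of the thresholds ensures the nesting $\Omega_{k+1}\subseteq\Omega_k$, with each $R_j^{k+1}$ contained in some $R_i^k$.

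Next I would verify that $g_k\to a$ in $L^1$. The key input is the weak-type bound $\rho(\Omega_k)\le\alpha_k^{-p}\|a\|_p^p$, which follows from the stopping condition $\alpha_k\rho(R_i^k)<\int_{R_i^k}|a|\dir$ combined with H\"older's inequality applied on each $R_i^k$. Together with $\|a\|_p\le\rho(R_0)^{1/p-1}$ and $\alpha_k=2^k/\rho(R_0)$, this yields $\|a-g_k\|_1\lesssim 2^{-k(p-1)}\to 0$ as $k\to\infty$, since $p>1$. Consequently the telescoping identity $a=g_0+\sum_{k\ge 0}(g_{k+1}-g_k)$ converges in $L^1$.

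The core step is to recognize each piece as a scalar multiple of a $(1,\infty)$-atom. The function $g_0$ is supported in $R_0$, bounded by $\kappa_0/\rho(R_0)$, and has vanishing integral by the cancellation of $a$, so it is $\kappa_0$ times a $(1,\infty)$-atom. For each $k\ge 0$, the nesting implies $g_{k+1}-g_k$ is supported in $\Omega_k$; on each $R_i^k$ its integral vanishes (because both $g_k$ and $g_{k+1}$ preserve the integral of $a$ over $R_i^k$), and its $L^\infty$ norm is at most $\kappa_0(\alpha_k+\alpha_{k+1})\le 3\kappa_0\alpha_{k+1}$. Hence $(g_{k+1}-g_k)\chi_{R_i^k}=3\kappa_0\alpha_{k+1}\rho(R_i^k)\cdot A_i^k$ for some $(1,\infty)$-atom $A_i^k$. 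Summing the coefficients gives $\|a\|_{H^{1,\infty}}\le\kappa_0+C\sum_{k\ge 0}\alpha_{k+1}\rho(\Omega_k)\lesssim\sum_{k\ge 0}2^{k(1-p)}$, a convergent series whose sum depends only on $p$ (and $\kappa_0$), producing the constant $C_p$.

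The hard part will be arranging the nested stopping-time \CZ decomposition so that the children $R_j^{k+1}$ inside each $R_i^k$ exhaust the ``bad'' part of $a$ at level $\alpha_{k+1}$; this is precisely what guarantees the crucial cancellation $\int_{R_i^k}(g_{k+1}-g_k)\dir=0$. Property~(iii) on the dyadic subdivision of \CZ sets into equal-measure pieces is what makes such a stopping-time construction possible in this nondoubling setting, and it is the only place where we depart in spirit from the classical Coifman--Weiss argument.
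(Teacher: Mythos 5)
Your argument is correct, but it is not the route the paper takes. The paper follows Coifman--Weiss Theorem A: it normalizes $b=\rho(R)a$, runs a single stopping-time decomposition on the averages of $|b|^p$ at one \emph{fixed} threshold $\alpha^p$ (with $\alpha$ chosen large, depending on $p$ and $d$), and then iterates the construction inductively inside the bad sets, producing at stage $n$ atoms with coefficients of size $2^{d(n+1)/p}2^n\alpha^{n+1}\rho(R_{j_n})$ and remainders $h_{j_n}$; summability of the coefficients and $L^1$-convergence of the remainders come from the geometric factor $\bigl(2\cdot 2^{d(1-p)/p}\alpha^{1-p}\bigr)^n$. You instead run stopping times on $|a|$ itself at geometrically \emph{increasing} thresholds $\alpha_k=2^k/\rho(R_0)$ over one fixed subdivision tree, and telescope the good functions, $a=g_0+\sum_k(g_{k+1}-g_k)$; here the decay comes from the weak-type/H\"older bound $\rho(\Omega_k)\lesssim 2^{-kp}\rho(R_0)$, giving coefficients $\alpha_{k+1}\rho(\Omega_k)\lesssim 2^{k(1-p)}$. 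Both proofs rest on exactly the same structural inputs: the subdivision property (iii) of \CZ sets (so that the descendants of a partition containing $R_0$ form a dyadic-like tree, the parent/child measure ratio is at most $2^d$, and stopping sets at a higher threshold nest inside those at a lower one), and an a.e.\ differentiation statement along the shrinking partition sets to bound the good function off the stopping sets --- a point the paper also uses without comment (``all the averages \dots are $\leq\alpha^p$, thus $|g(x)|\leq\alpha$ a.e.''). Your version avoids the nested multi-index induction and yields the constant $C_p$ more transparently as $2^d+C\sum_k 2^{k(1-p)}$; the paper's version stays closer to the reference \cite{CW2} it cites and only ever decomposes at one level, at the price of the bookkeeping in (i)--(vi). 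Two small things you should make explicit if you write this up: the initial average $\frac{1}{\rho(R_0)}\int_{R_0}|a|\dir\leq\alpha_0$ (which follows from $\|a\|_1\leq 1$) so the stopping time genuinely starts below threshold at $R_0$, and the fact that all levels use the \emph{same} fixed tree of subdivisions, which is what makes $\Omega_{k+1}\subseteq\Omega_k$ and the cancellation $\int_{R_i^k}(g_{k+1}-g_k)\dir=0$ automatic.
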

\begin{proof}
Let $a$ be a $(1,p)$-atom supported in the \CZ set $R$. We define $b:=\rho(R)\,a$. Note that $b$ is in $ L^p$ and $\|b\|_p\leq \rho(R)^{1/p}$. 

Let $\alpha$ be a positive number such that $\alpha> \max\{ 1,2^{-d/p}\,2^{\frac{1}{p-1}} \} $. We  shall prove that for all $n\in \NN$ there exist functions $a_{j_{\ell}}$, $h_{j_n}$ and \CZ sets $R_{j_{\ell}}$, with $j_{\ell}\in \NN ^{\ell}$, $\ell=0,...,n$, such that
\begin{equation}\label{b}
b=\sum_{\ell=0}^{n-1}2^{\frac{d(\ell +1)}{p}}\,2^{\ell}\,\alpha^{\ell +1}\sum _{j_{\ell}}\rho(R_{j_{\ell}})\,\,a_{j_{\ell}}+\sum _{j_n}h_{j_n}\,,
\end{equation}
where the following properties are satisfied:
\begin{itemize}
\item[(i)] $a_{\jl}$ is a $(1,\infty)$-atom supported in the \CZ set $R_{\jl}$;
\item[(ii)] $h_{j_n}$ is supported in $R_{j_n}$ and $\int h_{j_n}\dir=0$;
\item[(iii)] $\Big(\frac{1}{\rho(R_{j_n})}\int_{R_{j_n}}|h_{j_n}|^p\dir\Big)^{1/p}\leq 2^{dn/p}\,2^n\,\alpha^n$;
\item[(iv)]$\sum_{j_n}\|h_{j_n}\|^p_p\leq 2^{pn}\,\|b\|^p_p$;
\item[(v)] $|h_{j_n}(x)|\leq |b(x)|+2^{dn/p}\,2^n\,\alpha ^n \qquad \forall x\in R_{j_n}$;
\item[(vi)] $\sum_{j_n}\rho(R_{j_n})\leq 2^{d(-n+1)}\,\alpha ^{-np}\,\|b\|^p_p$.
\end{itemize}
We first suppose that the decomposition (\ref{b}) exists and we show that $a$ lies in $H^{1,\infty}$. Set $H_n=\sum_{j_n}h_{j_n}$. By H\"older's inequality
\begin{align*}
\|H_n\|_1&\leq\sum_{j_n}\|h_{j_n}\|_1\leq\sum_{j_n}\rho(R_{j_n})^{1/{p'}}\,\|h_{j_n}\|_p\,,
\end{align*}
where $p'$ is the conjugate exponent of $p$. Now by (iii) and (vi) we have
\begin{align*}
\|H_n\|_1&\leq\sum_{j_n}\rho(R_{j_n})^{1/{p'}}\rho(R_{j_n})^{1/p}\,  2^{dn/p}\,2^n\,\alpha ^n\\
%&=\sum_{j_n}\rho(R_{j_n}) M^{n/p}\,2^n\,\alpha ^n\\
&\leq 2^{d(-n+1)}\,\alpha ^{-np}\,\|b\|^p_p\,2^{dn/p}\,2^n\,\alpha^n\\
&\leq 2^d \big(2\,2^{\frac{d(1-p)}{p}}\,\alpha^{1-p}\big)^n \rho(R)\,.
\end{align*}
Then, since $\alpha >2^{-d/p}\,2^{\frac{1}{p-1}}$, the functions $H_n$ converge to $0$ in $L^1$ when $n$ goes to $\infty$.

This shows that the series $\sum_{\ell=0}^{\infty}2^{\frac{d(\ell +1)}{p}}\,2^{\ell}\,\alpha^{\ell +1}\sum _{j_{\ell}}\rho(R_{j_{\ell}})\,a_{j_{\ell}}$ converges to $b$ in $L^1$. Moreover, by (vi) we deduce that
\begin{align*}
\sum_{\ell=0}^{\infty} 2^{\frac{d(\ell +1)}{p}}\,2^{\ell}\,\alpha^{\ell +1}\,\sum_{j_{\ell}}\rho(R_{j_{\ell}})&\leq \sum_{\ell=0}^{\infty} 2^{\frac{d(\ell +1)}{p}}\,2^{\ell}\,\alpha^{\ell +1}\,2^{d(-\ell +1)}\,\alpha ^{-\ell p}\,\|b\|^p_p\\
&\leq 2^{d(1+1/p)}\,\alpha\, \sum_{\ell=0}^{\infty}\big(2\,2^{\frac{d(1-p)}{p}}\,\alpha^{1-p}\big)^{\ell}\,\rho(R)\\
&=C_{p}\,\rho(R)\,,
\end{align*}
because $\alpha >   2^{-d/p}\,2^{\frac{1}{p-1}}$, where $C_{p}$ depends only on $d,\,p,\, \alpha$.

It follows that $b$ is in $H^{1,\,\infty}$ and $\|b\|_{H^{1,\infty}}\leq C_{p}\,\rho(R)\,.$ Thus $a=\rho(R)^{-1}\,b$ is in  $H^{1,\,\infty}$ and $\|a\|_{H^{1,\infty}}\leq C_{p}$, as required. 

It remains to prove that the decomposition (\ref{b}) exists. This can be done by induction on $n$, following closely the proof of \cite[Theorem A]{CW2}. For the reader's convenience we give the proof in the case $n=1$, and we shall omit the details of the inductive step. 

We construct a partition $\mathcal P$ of $S$ in \CZ sets which contains the set $R$ (see \cite[Proof of 5.1]{HS}). 

{\bf{Step~$n=1$.}} We choose $R_0=R$. Since $\|b\|_p\leq \rho(R)^{1/p}$,
\begin{align*}
\frac{1}{\rho(R)}\int_R |b|^p \dir &\leq \frac{1}{\rho(R)} \,\|b\|_p^p \dir\leq 1\leq \alpha^p\,.
\end{align*}
We split up the set $R$ in at most $2^d$ \CZ subsets. If the average of $|b|^p$ on a subset is greater than $\alpha^p$, then we stop; otherwise we divide again the subset until we find sets on which the average of $|b|^p$ is greater than $\alpha ^p$. We denote by $\mathcal{C}$ the collection of the stopping sets. We distinguish two cases.

{\it{Case}} ~$\mathcal C=\emptyset$. In this case it suffices to define $R_0=R$, $a_0=2^{-d/p}\,\alpha^{-1}\,\rho(R_0)^{-1}\,b$ and $h_i=0$ for all $i\in\NN$. 

{\it{Case~$\mathcal C \neq \emptyset$.}} Let $\mathcal C=\{R_i:~i\in\NN\}$. The average of $|b|^p$ on each set $R_i$ is comparable with $\alpha^p$. Indeed, by construction we have
$$\frac{1}{\rho(R_i)}\int_{R_i}|b|^p\dir>\alpha^p\,.$$
On the other hand, there exists a set $R_i'$, which contains $R_i$, such that $\rho(R_i)\geq\frac{\rho(R_i')}{2^d}$ and $\frac{1}{\rho(R_i')}\int_{R_i'}|b|^p\dir\leq \alpha^p$. It follows that
$$\frac{1}{\rho(R_i)}\int_{R_i}|b|^p\dir\leq \frac{2^d}{\rho(R_i')}\int_{R_i'}|b|^p\dir \leq 2^d \,\alpha^p\,.$$
We define 
\begin{align*}
g(x)&=\begin{cases}
b(x) & {\rm{if}}~ x\notin\bigcup _i{ R_i}\\
\frac{1}{\rho(R_i)}\int_{R_i}b\dir & {\rm{if}}~ x\in R_i
\end{cases}\\
h_i(x)&=\begin{cases}
0 & {\rm{if}}~ x\notin R_i\\
b(x)-\frac{1}{\rho(R_i)}\int_{R_i}b\dir & {\rm{if}}~x\in R_i\qquad\forall i\in \NN\,.
\end{cases}
\end{align*}
Obviously 
\begin{align*}
b=g+\sum_i h_i&=2^{d/p}\,\alpha\,\rho(R_0)\,a_0+\sum_i h_i\,,
\end{align*}
where $a_0=2^{-d/p}\,\alpha^{-1}\,\rho(R_0)^{-1}\,g$. 

The function $a_0$ is supported in $R$ and has vanishing integral. By H\"older's inequality for any $x$ in $R_i$
$$|g(x)|\leq \frac{1}{\rho(R_i)}\int_{R_i}|b|\dir\leq \frac{1}{\rho(R_i)}\rho(R_i)^{1/{p'}}\Big(\int_{R_i}|b|^p\dir\Big)^{1/p}\leq 2^{d/p}\,\alpha\,.$$
If $x$ is in the complement of $\bigcup_i{R_i}$, then all the averages of $|b|^p$ on the sets of the partition $\mathcal P$ which contain $x$ are $\leq \alpha^p$. Thus $|g(x)|\leq \alpha$ for almost every $x$ in the complement of $\bigcup_i{R_i}$. Then $\|a_0\|_{\infty}\leq  \rho(R_0)^{-1}$, so that $a_0$ is a $(1,\infty)$-atom. 

We now verify that the functions $h_i$ satisfy properties (ii)-(vi). Each function $h_i$ is supported in $R_i$ and has vanishing integral. Moreover, by H\"older's inequality
\begin{align}\label{hi}
\|h_i\|_p&\leq \|b\|_{L^p(R_i)}+\rho(R_i)^{1/p}\,\frac{1}{\rho(R_i)}\int_{R_i}|b|\dir \leq 2\,\|b\|_{L^p(R_i)}\,.
\end{align}
Since the sets $R_i$ are mutually disjoint, by summing estimates (\ref{hi}) over $i\in\NN$, we obtain 
$$\sum_i\|h_i\|^p_p\leq 2^p\,\sum_i\|b\|^p_{L^p(R_i)}\leq 2^p\,\|b\|^p_p\,,$$
which proves (iv). From (\ref{hi}) we also have
$$\frac{1}{\rho(R_i)}\int_{R_i}|h_i|^p\dir\leq 2^p\,\frac{1}{\rho(R_i)}\int_{R_i}|b|^p\dir\leq M\,2^p\,\alpha^p\,,$$
which proves (iii). The pointwise estimate (v) of $h_i$ is an easy consequence of H\"older's inequality, since for all $x$ in $ R_i$
\begin{align*}
|h_i(x)|&\leq |b(x)|+\frac{1}{\rho(R_i)}\int_{R_i}|b|\dir\\
&\leq|b(x)|+\rho(R_i)^{-1}\,\rho(R_i)^{1/{p'}}\Big(\int_{R_i}|b|^p\dir\Big)^{1/p}\\
&\leq |b(x)|+M^{1/p}\,2\,\alpha\,.
\end{align*}
It remains to prove property (vi):
\begin{align*}
\sum_i\rho(R_i)&\leq \alpha^{-p}\sum_i\int_{R_i}|b|^p\dir\leq \alpha^{-p}\,\|b\|^p_p\,.
\end{align*}
This concludes the proof of the first step in the case when $\mathcal{C}\neq \emptyset$.

{\bf{Inductive step}}. Suppose that
$$ b=\sum_{\ell=0}^{n-1}2^{\frac{d(\ell +1)}{p}}\,2^{\ell}\,\alpha^{\ell +1}\,\sum _{j_{\ell}}\rho(R_{j_{\ell}})a_{j_{\ell}}+\sum _{j_n}h_{j_n}\,,$$
where the functions $a_{\jl},~h_{\jl}$ and the sets $R_{\jl}$ satisfy properties (i)-(vi). We shall prove that a similar decomposition of $b$ holds with $(n+1)$ in place of $n$. To do so, we decompose each function $h_{j_n}$ by following the same construction used in the case when $n=1$ and the proof of \cite[Theorem A]{CW2}. We omit the details.

\smallskip
This concludes the proof of the proposition.
\end{proof}
Theorem \ref{coincidono} is an easy consequence of Proposition \ref{atomo}.

In the sequel, we denote by $H^1$ the space $H^{1,\infty}$ and by $\|\cdot\|_{H^1}$ the norm $\|\cdot\|_{H^{1,\infty}}$.

\section{The space $BMO$}\label{BMO}
In this section, we introduce the space of functions of bounded mean oscillation and we investigate its properties. For every locally integrable function $f$ and every set $R$ we denote by $f_R$ the average of $f$ on $R$, i.e., $f_R=\frac{1}{\rho(R)}\int_Rf\dir$. 

\begin{defi}
The space $\mathcal{B}\mathcal{M}\mathcal{O}$ is the space of all functions in $L^1_{\rm{loc}}$ such that
$$\sup_R\frac{1}{\rho(R)}\int_R|f-f_R|\dir <\infty\,,$$
where the supremum is taken over all \CZ sets in the family $\mathcal R$. The space $BMO$ is the quotient of $\mathcal{B}\mathcal{M}\mathcal{O}$ module constant functions. It is a Banach space endowed with the norm 
$$\|f\|_{*}=\sup\Big\{\frac{1}{\rho(R)}\int_R|f-f_R|\dir :~R\in\mathcal R \Big\}\,.$$
\end{defi}
We now prove that the functions in $BMO$ satisfy the John--Nirenberg inequality.
\begin{teo}\label{JN}
{\rm{(John--Nirenberg inequality)}} There exist two positive constants $\eta$ and $A$ such that for any $f$ in $BMO$
$$\sup_{R\in\mathcal R}\frac{1}{\rho(R)}\int_R\exp\Big(\frac{\eta}{\|f\|_*}|f-f_R|  \Big)\dir \leq A\,.$$
\end{teo}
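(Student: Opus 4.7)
The plan is to adapt the classical Calder\'on--Zygmund stopping--time argument of John and Nirenberg, using the subdivision property (iii) of \CZ sets in place of the usual dyadic decomposition of cubes. After replacing $f$ by $f/\|f\|_*$ we may assume $\|f\|_* = 1$, and it suffices to prove exponential decay of the distribution function
$$\psi(\lambda) := \sup_{R \in \mathcal R}\, \frac{1}{\rho(R)}\,\rho\bigl(\{x \in R : |f(x) - f_R| > \lambda\}\bigr),$$
which trivially satisfies $\psi\leq 1$.

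Fix a threshold $\alpha > 1$ (to be chosen later) and a \CZ set $R$. Since the average of $|f-f_R|$ on $R$ is at most $1 < \alpha$, we may iteratively apply property (iii), splitting the current set into between $2$ and $2^d$ equal-$\rho$-measure \CZ children, and stopping on a child $R'$ as soon as $\frac{1}{\rho(R')}\int_{R'}|f - f_R|\dir > \alpha$. This produces a pairwise disjoint family $\{R_i\}\subseteq \mathcal R$ with the following properties: the average of $|f-f_R|$ over $R_i$ lies in $(\alpha,2^d\alpha]$, the upper bound holding because the parent of $R_i$ is at most $2^d$ times larger in $\rho$-measure and passed the test with value $\leq\alpha$; $|f(x) - f_R| \leq \alpha$ for a.e.\ $x \in R \setminus \bigcup_i R_i$, by Lebesgue differentiation along the refinement; $\sum_i\rho(R_i) \leq \alpha^{-1}\int_R|f-f_R|\dir \leq \alpha^{-1}\rho(R)$; and $|f_{R_i} - f_R|\leq 2^d\alpha$.

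For $\lambda > 2^d\alpha$ the triangle inequality combined with the definition of $\psi$ applied on each \CZ set $R_i$ yields
$$\rho\bigl(\{x \in R_i : |f-f_R| > \lambda\}\bigr) \leq \rho\bigl(\{x \in R_i : |f - f_{R_i}| > \lambda - 2^d\alpha\}\bigr) \leq \psi(\lambda - 2^d\alpha)\,\rho(R_i);$$
summing over $i$ (and noting that outside $\bigcup_i R_i$ the level set is empty) and then taking the supremum over $R$ gives the self--improvement $\psi(\lambda)\leq \alpha^{-1}\psi(\lambda - 2^d\alpha)$. Choosing $\alpha = 2$ and iterating, we obtain $\psi(k\cdot 2^{d+1}) \leq 2^{-k}$ for every $k\geq 1$, and monotonicity of $\psi$ upgrades this to $\psi(\lambda) \leq C\,\nep^{-c\lambda}$ with $c = (\log 2)/2^{d+1}$.

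The theorem then follows from layer--cake integration: for any $\eta\in(0,c)$ and any $R\in\mathcal R$,
$$\frac{1}{\rho(R)}\int_R\nep^{\eta|f-f_R|}\dir = 1 + \eta\int_0^\infty \nep^{\eta\lambda}\,\frac{\rho(\{x\in R:|f-f_R|>\lambda\})}{\rho(R)}\di\lambda \leq 1 + C\eta\int_0^\infty\nep^{(\eta-c)\lambda}\di\lambda =: A,$$
uniformly in $R$; undoing the rescaling by $\|f\|_*$ recovers the stated inequality. The only real obstacle is to formalise the stopping--time construction so that iterated application of property (iii) inside $R$ yields a \CZ refinement of arbitrarily small diameter along which Lebesgue differentiation is valid and along which the selection of stopping sets makes sense; once this is in place (it is essentially the partition $\mathcal P$ constructed in the proof of Proposition \ref{atomo}), the remainder of the argument is formal.
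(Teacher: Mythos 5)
Your argument is correct, and it uses the same core construction as the paper --- a stopping-time subdivision of a \CZ set via property (iii), stopping when the average of $|f-f_R|$ exceeds a fixed threshold, with the bounds $|f_{R_i}-f_R|\leq 2^d\alpha$ coming from the at-most-$2^d$ measure ratio between a stopping set and its parent and $\sum_i\rho(R_i)\leq\alpha^{-1}\rho(R)$ coming from the stopping condition --- but the way you extract exponential integrability is genuinely different. The paper works with $F(t)=\sup_R\rho(R)^{-1}\int_R\exp\bigl(t\,|f-f_R|/\|f\|_*\bigr)\dir$, which is only known to be finite a priori when $f$ is bounded; it therefore first proves the estimate for $f\in L^\infty$ by an absorption inequality $F(t)\bigl(1-\nep^{(2^d+2)t}/2\bigr)\leq\nep^{2t}$ and then passes to general $f\in BMO$ through truncations $f_k$ and a limiting argument. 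You instead run the original John--Nirenberg scheme on the normalized distribution function $\psi(\lambda)$, which is trivially bounded by $1$, derive the recursion $\psi(\lambda)\leq\alpha^{-1}\psi(\lambda-2^d\alpha)$, iterate to geometric decay, and integrate by the layer-cake formula; this bypasses both the finiteness issue and the truncation step, at the cost of one extra (routine) integration at the end. Two small remarks: your claim $\psi(k\,2^{d+1})\leq 2^{-k}$ for all $k\geq1$ should really be $\psi(k\,2^{d+1})\leq 2^{-(k-1)}$ (the recursion only applies for $\lambda>2^{d+1}$), which only changes the constant $C$ in $\psi(\lambda)\leq C\,\nep^{-c\lambda}$ and is harmless; and the Lebesgue-differentiation step on $R\setminus\bigcup_iR_i$, which you rightly flag, is used at exactly the same level of detail in the paper (it rests on the partition of $S$ into \CZ sets from Hebisch--Steger, as in the proof of Proposition \ref{atomo}), so your proof is no less complete than the published one on this point.
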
 
\begin{proof}
We first take $f$ in $L^{\infty}$. Let $R_0$ be a fixed \CZ set. %Consider now the \CZ decompostion of $(f-f_{R_0})\chi_{R_0}$ at level $\alpha=2\,\|f\|_*$. 

Note that $\frac{1}{\rho(R_0)}\int_{R_0}|f-f_{R_0}|\dir\leq  2\,\|f\|_*$. We split up $R_0$ in at most $2^d$ \CZ sets. If the average of $|f-f_{R_0}|$ on a subset is $>2\,\|f\|_*$, then we stop. Otherwise we go on by splitting the sets that we obtain, until we find \CZ sets contained in $R_0$ where the average of $|f-f_{R_0}|$ is $> 2\,\|f\|_*$. Let $\{R_i\}_i$ be the collection of the stopping sets. We have that:
\begin{itemize}
\item[(i)] $|(f-f_{R_0})\chi_{R_0}|\leq 2\|f\|_*$ on $(\cup_i R_i)^c$;
\item[(ii)] $\rho(\cup _iR_i)\leq \frac{\|(f-f_{R_0})\chi_{R_0}\|_1}{2\|f\|_*}\leq \frac{\rho(R_0)\,\|f\|_*}{2\|f\|_*}=\frac{\rho(R_0)}{2}$;
\item[(iii)] $\frac{1}{\rho(R_i)}\int_{R_i}|f-f_{R_0}|\chi_{R_0}\dir> 2\,\|f\|_*$;
\item[(iv)] for each set $R_i$ there exists a \CZ set $R_i'$ which contains $R_i$, whose measure is $\leq 2^d\,\rho(R_i)$ and such that $\frac{1}{\rho(R_i')}\int_{R_i'}|f-f_{R_0}|\chi_{R_0}\dir\leq 2\,\|f\|_*$. Thus 
\begin{align*}
|f_{R_i}-f_{R_0}|&\leq |f_{R_i}-f_{R_i'}|+|f_{R_i'}-f_{R_0}|\\
&\leq \frac{1}{\rho(R_i)}\int_{R_i}|f-f_{R_i'}|\dir+\frac{1}{\rho(R_i')}\int_{R_i'}|f-f_{R_0}|\dir\\
&\leq \frac{2^d}{\rho(R_i')}\int_{R_i'}|f-f_{R_i'}|\dir +2\,\|f\|_*\\
&\leq (2^d+2)\,\|f\|_*\,.
\end{align*}
\end{itemize}
For any positive $t$ we define $F(t)=\sup_R\frac{1}{\rho(R)}\int_R\exp\Big(\frac{t}{\|f\|_*}|f-f_R|\Big)\dir$, which is finite, since we are assuming that $f$ is bounded. From (i)-(iv) above we obtain that
\begin{align*}
\frac{1}{\rho(R_0)}\int_{R_0}\exp\Big(\frac{t}{\|f\|_*}|f-f_{R_0}|\Big)\dir&\leq \frac{1}{\rho(R_0)}\int_{R_0-\cup _iR_i}\nep^{2t}\dir+\\
&+\frac{1}{\rho(R_0)}\sum_i\int_{R_i}\exp\Big(\frac{t}{\|f\|_*}\big(|f-f_{R_i}|+|f_{R_i}-f_{R_0}|\big)\Big)\dir\\
&\leq \nep^{2t}+\frac{1}{\rho(R_0)}\sum_i \int_{R_i}\nep^{(2^d+2)t}\,\exp\Big(\frac{t}{\|f\|_*}|f-f_{R_i}|\Big)\dir\\
&\leq \nep^{2t}+\nep^{(2^d+2)t}\,\frac{1}{\rho(R_0)}\,\frac{\rho(R_0)}{2}\,F(t)\,.
\end{align*}
By taking the supremum over all \CZ sets $R_0$ we deduce that
$$F(t)(1-\nep^{(2^d+2)t}/2)\leq \nep^{2t}\,.$$
This implies that there exists a sufficently small positive $\eta$ such that $F(\eta)\leq C$.

This proves the theorem for all bounded functions. Now let $f$ be in $BMO$ and for $k\in \NN$ define $f_k:S\to \CC$ by
$$f_k(x)=\begin{cases}
f(x) & \text{if $|f(x)|\leq k$}\\
k\,\frac{f(x)}{|f(x)|} & \text{if $|f(x)|> k$}\,.
\end{cases}
$$
Then $\|f_k\|_{\infty}\leq k$ and $\|f_k\|_{*}\leq C\,\|f\|_*$. Moreover $|f_k-f|$ tends monotonically to zero when $k$ tends to $\infty$. We have that
\begin{align*}
\frac{1}{\rho(R)}\int_R\exp\Big(\frac{\eta}{\|f\|_*}|f-f_R|  \Big)\dir&\leq \frac{1}{\rho(R)}\int_R\exp\Big(\frac{\eta}{\|f\|_*}|f-f_k|  \Big)\dir+\\
&+\frac{1}{\rho(R)}\int_R\exp\Big(\frac{\eta}{\|f_k\|_*}|f_k-(f_{k})_{R}|  \Big)\dir\\
&+\frac{1}{\rho(R)}\int_R\exp\Big(\frac{\eta}{\|f\|_*}|(f_k)_{R}-f_R|  \Big)\dir\\
&\leq C+\frac{1}{\rho(R)}\int_R\exp\Big(\frac{\eta}{\|f_k\|_*}|f_k-(f_k)_R|  \Big)\dir\\
&\leq A\,,
\end{align*}
if $k$ is sufficently large. Thus the theorem is proved for all functions in $BMO$.
\end{proof}
A standard consequence of the John--Nirenberg inequality is the following.
\begin{coro}\label{BMOq}
The following hold:
\begin{itemize}
\item[(i)] there exist two positive constants $\eta$ and $A$ such that for any $t>0$
$$\rho\big(\{x\in R:~|f(x)-f_R|>t\,\|f\|_*\}\big)\leq A\,\nep^{-\eta \,t}\,\rho(R)\qquad \forall R\in\mathcal R,\,\forall f\in BMO\,;$$
\item[(ii)] for any $q$ in $ (1,\infty)$ there exists a constant $C_q$, which depends only on $q$, such that 
$$\Big( \frac{1}{\rho(R)}\int_R|f-f_R|^q\dir  \Big)^{1/q}\leq C_q\,\|f\|_*\qquad \forall R\in\mathcal R,\,\forall f\in BMO\,.$$
\end{itemize}
\end{coro}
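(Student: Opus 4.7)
The plan is to derive both parts directly from Theorem \ref{JN} using standard distributional arguments; neither part presents a genuine obstacle, as both are routine consequences of exponential integrability.

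For part (i), I would apply Chebyshev's inequality to the exponential. Fix $R\in\mathcal R$, $f\in BMO$, and $t>0$. Since $\exp$ is increasing,
\begin{equation*}
\rho\bigl(\{x\in R:|f(x)-f_R|>t\|f\|_*\}\bigr)
= \rho\Bigl(\Bigl\{x\in R:\exp\Bigl(\tfrac{\eta}{\|f\|_*}|f-f_R|\Bigr)>\nep^{\eta t}\Bigr\}\Bigr)
\leq \nep^{-\eta t}\int_R \exp\Bigl(\tfrac{\eta}{\|f\|_*}|f-f_R|\Bigr)\dir,
\end{equation*}
where $\eta$ is the constant given by Theorem \ref{JN}. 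By that theorem the integral is bounded by $A\,\rho(R)$, giving (i) with the same constants $\eta$ and $A$.

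For part (ii), I would use the layer-cake representation. Writing $\mu_R(s)=\rho(\{x\in R:|f(x)-f_R|>s\})$ and changing variables $s=t\|f\|_*$,
\begin{align*}
\int_R|f-f_R|^q\dir
&= q\int_0^\infty s^{q-1}\,\mu_R(s)\dil s\\
&= q\,\|f\|_*^{q}\int_0^\infty t^{q-1}\,\mu_R\bigl(t\|f\|_*\bigr)\dil t.
\end{align*}
Inserting the bound from (i) yields
\begin{equation*}
\int_R|f-f_R|^q\dir \leq q\,A\,\|f\|_*^{q}\,\rho(R)\int_0^\infty t^{q-1}\,\nep^{-\eta t}\dil t
= \frac{q\,A\,\Gamma(q)}{\eta^{q}}\,\|f\|_*^{q}\,\rho(R).
\end{equation*}
Taking the $q$-th root and dividing by $\rho(R)^{1/q}$ gives (ii) with $C_q=\bigl(qA\Gamma(q)/\eta^{q}\bigr)^{1/q}$, which depends only on $q$ (and the absolute constants $\eta,A$).

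There is no real hard step; the only care needed is ensuring the exponential moment bound of Theorem \ref{JN} applies to the truncated level sets used by Chebyshev, which is immediate because we integrate a nonnegative function over $R$. The layer-cake identity requires $|f-f_R|$ to be locally integrable on $R$, which follows from $f\in L^1_{\rm loc}$ and $R$ having finite right measure.
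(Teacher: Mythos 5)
Your proof is correct. Part (i) is exactly the paper's argument: Chebyshev applied to the exponential moment bound of Theorem \ref{JN}, with the same constants $\eta$ and $A$. For part (ii) you take a slightly different (but equally standard) route: the paper does not pass through (i) at all, but instead uses the pointwise inequality $x^q\leq C\,\nep^{\eta x}$ for $x>0$ and bounds $\int_R |f-f_R|^q\,\|f\|_*^{-q}\dir$ directly by the exponential integral from Theorem \ref{JN}; you instead integrate the tail estimate of (i) against the distribution function, which yields the explicit constant $C_q=\bigl(qA\Gamma(q)\bigr)^{1/q}/\eta$. The two derivations are equivalent in substance and length; yours makes the dependence of $C_q$ on $q$, $\eta$, $A$ explicit (linear growth in $q$, the expected John--Nirenberg behaviour), while the paper's hides the constant but avoids the layer-cake bookkeeping. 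Your closing remarks are fine, though strictly unnecessary: the layer-cake identity holds for any nonnegative measurable function, with both sides possibly infinite, so no a priori integrability of $|f-f_R|^q$ is needed. (Also note the implicit convention, shared with the paper, that $\|f\|_*>0$, i.e.\ $f$ is not a.e.\ constant; otherwise both statements are trivial.)
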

\begin{proof}
Let $f$ be in $BMO$, $R$ be a \CZ set and take $t>0$. 
 
To prove (i) we observe that by Theorem \ref{JN} 
\begin{align*}
\rho\big(\{x\in R:~|f(x)-f_R|>t\,\|f\|_*\}\big)&=\rho\Big(\{x\in R:~\exp\Big(\frac{\eta}{\|f\|_*}|f(x)-f_R|\Big)>\nep^{\eta\,t}\}\Big)\\
&\leq \frac{\int_R\exp\Big(\frac{\eta}{\|f\|_*}|f-f_{R}|\Big)\dir }{\nep^{\eta\,t}}\\
&\leq A\,{\nep^{-\eta\,t}}\,\rho(R)\,,
\end{align*}
where $\eta$ and $A$ are the constants which appear in Theorem \ref{JN}. 

We now prove (ii).  If $q$ is in $ (1,\infty)$, then there exists $C$ such that $x^q\leq C\,\nep^{\eta\,x}$ for $x>0$. It clearly follows that 
\begin{align*}
\int_R\frac{|f-f_R|^q}{\|f\|_*^q}\dir&\leq \int_R\exp\Big(\frac{\eta}{\|f\|_*}|f-f_{R}|\Big)\dir \leq C\,\rho(R)\,.
\end{align*}
Thus 
$$\Big( \frac{1}{\rho(R)}\int_R|f-f_R|^q\dir  \Big)^{1/q}\leq C_q\,\|f\|_*\,,$$
where $C_q$ only depends on $q$.
\end{proof}
For any $q$ in $ [1,\infty)$ and for every function $f$ in $L^{q}_{\rm{loc}}$ define
$$\|f\|_{q,*}=\sup_{R\in\mathcal R}\Big(\frac{1}{\rho(R)}\int_R|f-f_R|^q\dir \Big)^{1/q}\,,$$
and $BMO_q=\{f\in L^{q}_{\rm{loc}}:~\|f\|_{q,*}<\infty\}\,$. Note that $BMO_1=BMO$ and $\|\cdot\|_{1,*}=\|\cdot\|_*$.

By Corollary \ref{BMOq}(ii), if $f$ is in $BMO$, then $f\in BMO_q$ and $\|f\|_{q,*}\leq C_q\,\|f\|_*$, for any $q$ in $ (1,\infty)$.

Conversely, for any $q$ in $ (1,\infty)$, if $f$ is in $BMO_q$, then trivially $f$ is in $BMO$ and $\|f\|_*\leq \|f\|_{q,*}$.

This means that all the spaces $BMO_q$, with $q$ in $ (1,\infty)$, are equivalent to $BMO$. 

\smallskip
We now prove that the dual space of $H^{1,2}$ may be 
identified with $BMO_{2}$. 
\begin{teo}\label{dualitytheorem}
{\rm{(duality theorem)}}
The following hold:
\begin{itemize}
\item[(i)] for any $f$ in $BMO_{2}$ the functional $\ell$ defined on $H^{1,2}_{\rm{fin}}$ by
$$\ell(g)=\int f\,g\dir\qquad \forall g\in H^{1,2}_{\rm{fin}} \,,$$
extends to a bounded functional on $H^{1,2}$. Furthermore, there exists a constant $C$ such that
$$\|\ell\|_{(H^{1,2})^*}\leq C\,\|f\|_{2,*}\,;$$
\item[(ii)] there exists a constant $C$ such that for any bounded linear functional $\ell$ on $H^{1,2}$ there exists a function $f^{\ell}$ in $BMO_{2}$ such that $\|f^{\ell}\|_{2,*}\leq C\,\|\ell\|_{(H^{1,2})^*}$ and $\ell(g)=\int f^{\ell}\,g\dir$ for any $g$ in $ H^{1,2}_{\rm{fin}}$.
\end{itemize}
\end{teo}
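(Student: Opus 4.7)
The plan is to follow the classical Fefferman-type duality argument, adapted to the setting of CZ sets.

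For part (i), the computation reduces to a single $(1,2)$-atom. If $a$ is a $(1,2)$-atom supported in a CZ set $R$, then the vanishing integral of $a$ allows me to subtract the mean: $\int fa\dir = \int_R (f-f_R) a\dir$. Cauchy--Schwarz combined with the size estimate $\|a\|_2 \leq \rho(R)^{-1/2}$ yields
$$\Big|\int fa\dir\Big| \leq \|a\|_2 \,\|f-f_R\|_{L^2(R)} \leq \rho(R)^{-1/2} \cdot \rho(R)^{1/2}\|f\|_{2,*} = \|f\|_{2,*}.$$
For any finite atomic decomposition $g=\sum_j \lambda_j a_j$ this gives $|\ell(g)| \leq \|f\|_{2,*}\sum_j|\lambda_j|$; taking the infimum over decompositions produces $|\ell(g)| \leq \|f\|_{2,*}\,\|g\|_{H^{1,2}}$. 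Density of $H^{1,2}_{\rm{fin}}$ in $H^{1,2}$ extends $\ell$ uniquely to a bounded functional of the required norm.

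For part (ii), I would start by observing that, for each fixed CZ set $R$, the subspace
$$L^2_0(R) = \Big\{g \in L^2 : \mathrm{supp}\,g \subseteq R,\ \textstyle\int g\dir = 0\Big\}$$
embeds continuously in $H^{1,2}$: the function $g/(\rho(R)^{1/2}\|g\|_2)$ is a $(1,2)$-atom, hence $\|g\|_{H^{1,2}} \leq \rho(R)^{1/2}\|g\|_2$. Therefore the restriction of $\ell$ to the Hilbert space $L^2_0(R)$ is a bounded linear functional of norm at most $\rho(R)^{1/2}\|\ell\|_{(H^{1,2})^*}$, and the Riesz representation theorem produces a unique $f^R \in L^2_0(R)$ (that is, with vanishing average over $R$) such that $\ell(g) = \int f^R g\dir$ for every $g \in L^2_0(R)$, with $\|f^R\|_{L^2(R)} \leq \rho(R)^{1/2}\|\ell\|_{(H^{1,2})^*}$.

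To patch these local representatives into a global function I would fix an increasing sequence $R_1 \subset R_2 \subset \cdots$ of CZ sets with $\bigcup_n R_n = S$, available because the explicit form in Definition \ref{Czsets} produces CZ sets of arbitrarily large measure (let the dyadic cube side and the parameter $a$ tend to infinity simultaneously). For $n<m$, both $f^{R_n}$ and $f^{R_m}|_{R_n}$ represent $\ell$ on $L^2_0(R_n)$, hence differ on $R_n$ by an additive constant; subtracting constants inductively yields representatives $\tilde f_n$ with $\tilde f_m|_{R_n}=\tilde f_n$, and I define $f^\ell = \lim_n \tilde f_n$. For any CZ set $R$, choose $n$ with $R \subseteq R_n$: every $g \in L^2_0(R)$ extended by zero lies in $L^2_0(R_n)$, and since $f^\ell - (f^\ell)_R$ differs from $f^{R_n} - (f^{R_n})_R$ on $R$ only by an additive constant (which integrates against $g$ to zero), the Riesz bound on $R$ yields
$$\Big(\frac{1}{\rho(R)}\int_R |f^\ell - (f^\ell)_R|^2\dir\Big)^{1/2} \leq \|\ell\|_{(H^{1,2})^*},$$
so $\|f^\ell\|_{2,*} \leq \|\ell\|_{(H^{1,2})^*}$. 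The representation $\ell(g)=\int f^\ell g\dir$ on $H^{1,2}_{\rm{fin}}$ follows by linearity, reducing to a single atom whose support lies in some $R_n$. The main obstacle is the coherent patching: one must confirm that an exhausting nested sequence of CZ sets exists and that the additive constants can be chosen consistently, after which everything reduces to standard Hilbert-space duality.
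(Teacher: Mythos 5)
Your argument is correct and essentially the paper's own: part (i) is the standard single-atom estimate that the paper omits (deferring to the classical references), and in part (ii) your exhaustion by the \CZ sets $Q_n\times[\nep^{-n},\nep^{n}]$, the Riesz representation of $\ell$ on each Hilbert space $L^2_0(R_n)$, and the patching of the local representatives modulo additive constants is exactly the content of the paper's identification of the dual of the strict inductive limit $L^2_{c,0}(S)$ with $L^{2}_{\mathrm{loc}}/\CC$, just carried out by hand. In both treatments the final $BMO_2$ bound comes from the same observation you use, namely that $\|g\|_2^{-1}\rho(R)^{-1/2}g$ is a $(1,2)$-atom for every mean-zero $g\in L^2$ supported in a \CZ set $R$, so the two proofs coincide in substance.
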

\begin{proof}
The proof of (i) follows the proof of the analogue result in the classical setting \cite{CW2,S}. We omit the details.

We now prove (ii). For any $n\in \NN$ let $R_n$ be the \CZ set $Q_n\times [\nep^{-n},\nep^n]$, where $Q_n$ is a dyadic cube in $\RR^d$ centred at $0$ of side $L_n$, such that $\nep^{2n}\leq L_n< \nep^{8n}$. Obviously, $\bigcup_nR_n=S$.

For any $n\in \NN$ let $X_n$ be the space $L^2_{0}(R_n)$ of all functions in $L^2$ which are supported in $R_n$ and have vanishing integral. The space $(X_n,\|\cdot \|_2)$ 
is a Banach space. We denote by $X$ the space $L^2_{c,0}(S)$ of all functions in $L^2$ with compact support and vanishing integral, interpreted as the strict inductive limit of the spaces $X_n$ (see \cite[II, p. 33]{B} for the definition of the strict inductive limit topology). Observe that $H^{1,2}_{\rm{fin}}$ and $X$ agree as vector spaces.

For any $g$ in $X_n$ the function $\rho(R_n)^{-1/2}\,\|g\|_2^{-1}\,g$ is a $(1,2)$-atom, so that $g$ is in $H^{1,2}$ and $\|g\|_{H^{1,2}}\leq \rho(R_n)^{1/2}\,\|g\|_2$. Hence $X\subset H^{1,2}$ and the inclusion is continuous. 

Now take a bounded linear functional $\ell$ on $H^{1,2}$. Since $X\subset H^{1,2}$, $\ell$ lies in the dual of $X$, i.e, the quotient space $L^{2}_{\rm{loc}}/{\CC}$. Then there exists a function $f^{\ell}$ in $ L^{2}_{\rm{loc}}$ such that
$$\ell(g)=\int f^{\ell}\,g\dir\qquad \forall g\in X\,.$$
It remains to show that $f^{\ell}$ is in $BMO_{2}$. Let $R$ be a \CZ set. For any function $g$ in $X$ which is supported in $R$ the function $\|g\|_2^{-1}\,\rho(R)^{-1/2}\,g$ is a $(1,2)$-atom. Thus
$$\Big|\int_R f^{\ell}\,g\dir\Big|=|\ell(g)|\leq \|\ell\|_{(H^{1,2})^*}\,\|g\|_2\,\rho(R)^{1/2}\,.$$
It easily follows that $\Big(\int_R|f^{\ell}-f^{\ell}_R|^{2}\dir\Big)^{1/2} \leq  \|\ell\|_{(H^{1,2})^*}\,\rho(R)^{1/2}$, i.e., $f^{\ell}$ is in $BMO_{2}$ and $\|f^{\ell}\|_{2,*}\leq \|\ell\|_{(H^{1,2})^*}$. 
\end{proof}
Since we already proved that the space $H^1$ is equivalent to $H^{1,2}$, and the space $BMO$ is equivalent to $BMO_{2}$, the Theorem \ref{dualitytheorem} means that $BMO$ may be identified with the dual space of $H^1$.

\section{$H^1$--$L^1$-boundedness of integral operators}\label{singularintegrals}
We now prove that integral operators whose kernels satisfy a suitable integral H\"ormander condition are bounded from $H^1$ to $L^1$ and from $L^{\infty}$ to $BMO$. Note that the integral H\"ormander condition which we require below is weaker than the integral conditions in the hypothesis of \cite[Theorem 1.2]{HS}. 
\begin{teo}\label{TeolimH1L1}
Let $T$ be a linear operator which is bounded on $L^2$ and admits a locally integrable kernel $K$ off the diagonal which satisfies the condition 
\begin{align}\label{stimaH}
\sup_{R\in \mathcal R} \sup_{y,\,z\in R}\int_{(R^*)^c}|K(x,y)-K(x,z)| \,\dir (x) &<\infty\,.
\end{align} 
Then $T$ extends to a bounded operator from $H^{1}$ to $L^1$.

If the kernel $K$ satisfies the condition
\begin{align}\label{stimaHdual}
\sup_{R\in\mathcal R} \sup_{y,\,z\in R}\int_{(R^*)^c}|K(y,x)-K(z,x)| \,\dir(x) &< \infty \,,
\end{align}
then $T$ extends to a bounded operator from $L^{\infty}$ to $BMO$. 
\end{teo}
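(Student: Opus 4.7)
The plan is to handle the two conclusions by a shared local/non-local decomposition relative to the dilated \CZ set $R^*$, obtaining the second assertion from the first via duality. For the $H^1$--$L^1$ statement, by Theorem \ref{coincidono} it is enough to prove a uniform bound $\|Ta\|_1\le C$ for every $(1,2)$-atom $a$; the passage from finite atomic sums to $H^1$ then follows by the atomic decomposition together with absolute convergence in $L^1$. Fix such an atom $a$ supported in a \CZ set $R$ and split
\begin{align*}
\|Ta\|_1 = \int_{R^*}|Ta|\dir + \int_{(R^*)^c}|Ta|\dir.
\end{align*}
The first integral is controlled by Cauchy--Schwarz, the $L^2$-boundedness of $T$, the inclusion $\rho(R^*)\le\kappa_0\rho(R)$, and the normalization $\|a\|_2\le\rho(R)^{-1/2}$, yielding
$$\int_{R^*}|Ta|\dir \le \rho(R^*)^{1/2}\,\|Ta\|_2 \le \|T\|_{2\to 2}\,\kappa_0^{1/2}.$$
For the non-local piece, pick any $z_R\in R$ and use $\int a\dir=0$ to rewrite $Ta(x)=\int_R \bigl(K(x,y)-K(x,z_R)\bigr)\,a(y)\dir(y)$; Fubini combined with hypothesis (\ref{stimaH}) then yields $\int_{(R^*)^c}|Ta|\dir \le C\,\|a\|_1\le C$.

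For the $L^{\infty}$--$BMO$ statement, I would argue by duality. Since $T$ is $L^2$-bounded so is its adjoint $T^*$, whose kernel is $K(y,x)$, so the hypothesis (\ref{stimaHdual}) on $K$ is precisely the hypothesis (\ref{stimaH}) on the kernel of $T^*$. By the first part, $T^*$ extends boundedly from $H^1$ to $L^1$, and transposing via the identification $BMO=(H^1)^*$ from Theorem \ref{dualitytheorem} produces a bounded extension of $T$ from $L^\infty=(L^1)^*$ into $BMO$. A direct argument is also available: splitting $f=f\chi_{R^*}+f\chi_{(R^*)^c}=:f_1+f_2$, the local piece $Tf_1$ is estimated via $L^2$-boundedness exactly as above, while for the non-local piece one verifies $|Tf_2(x)-Tf_2(z)|\le C\,\|f\|_\infty$ for $x,z\in R$ directly from (\ref{stimaHdual}), and averaging over $z\in R$ bounds the mean oscillation.

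The main subtlety is the extension step: verifying that $Tf=\sum_j\lambda_j\,Ta_j$ is independent of the chosen atomic decomposition when passing from $H^{1,2}_{\rm{fin}}$ to all of $H^1$, and analogously that $Tf$ for $f\in L^\infty$ assembles consistently modulo constants from local pieces. The duality route minimizes this technicality by reducing the $L^\infty$--$BMO$ extension entirely to the $H^1$--$L^1$ one already established.
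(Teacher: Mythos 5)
Your atom estimate and your treatment of the second statement coincide with the paper's: Cauchy--Schwarz and $L^2$-boundedness on $R^*$, the cancellation of the atom together with (\ref{stimaH}) on $(R^*)^c$, and the $L^{\infty}$--$BMO$ bound obtained by applying the first part to the adjoint and dualizing through Theorem \ref{dualitytheorem}. (Choosing a point $z_R\in R$ rather than $x_R$ is, if anything, slightly cleaner, since the hypothesis takes the supremum over $y,z\in R$.)

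The gap is exactly at the point you flag and then leave open: the passage from the uniform bound $\|Ta\|_1\leq C$ on $(1,2)$-atoms to boundedness of $T$ from $H^1$ to $L^1$ does not follow merely from ``absolute convergence in $L^1$''. Given $f=\sum_j\lambda_j a_j$, the series $\sum_j\lambda_j\,Ta_j$ does converge absolutely in $L^1$, but you must show that its sum equals $Tf$, i.e.\ that the extension is independent of the chosen decomposition and consistent with the operator already defined on $L^2$; equivalently, a bound on individual atoms only controls the infimum over \emph{finite} atomic decompositions of $g\in H^{1,2}_{\mathrm{fin}}$, which is not a priori comparable to $\|g\|_{H^{1,2}}$. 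The duality route you propose removes this difficulty only for the $L^{\infty}$--$BMO$ statement; the $H^1$--$L^1$ statement, on which that duality argument itself rests, still needs it. The paper closes the gap with an a priori continuity: by \cite[Remark 1.4]{HS} the hypotheses imply that $T$ is bounded from $L^1$ to $L^{1,\infty}$, so the partial sums $f_N=\sum_{j\leq N}\lambda_j a_j$, which converge to $f$ in $L^1$, have images $Tf_N$ converging to $Tf$ in $L^{1,\infty}$, while by (\ref{Ta}) they converge to $\sum_j\lambda_j\,Ta_j$ in $L^1$; uniqueness of limits identifies $Tf$ with the series and yields $\|Tf\|_1\leq C\,\|f\|_{H^1}$. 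To complete your argument you should either invoke this weak type $(1,1)$ bound (or some comparable a priori bound on a space containing $H^1$ in which the partial sums converge), or prove that the finite and infinite atomic norms are equivalent on $H^{1,2}_{\mathrm{fin}}$; neither is automatic from the atom estimate alone.
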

\begin{proof}
Suppose that (\ref{stimaH}) is satisfied. We first show that there exists a constant $C$ such that for any $(1,2)$-atom $a$
\begin{equation}\label{Ta}
\|Ta\|_1\leq C\,.
\end{equation}
Let $a$ be a $(1,2)$-atom supported in the \CZ set $R$. Recall that $R\subseteq {B(x_R,\kappa_0 \,r_R)}$, for some $x_R$ in $S$ and $r_R>0$, and that $R^*$ denotes the dilated set $\{x\in S:~d(x,R)<r_R\}$. 

We estimate the integral of $Ta$ on $R^*$ by the Cauchy--Schwarz inequality:
\begin{align}\label{suR^*}
\int_{R^*} |Ta|\dir&\leq \|Ta\|_{2}\,\rho(R^*)^{1/2}\nonumber\\
&\leq\kappa_0^{1/2}\, |\!|\!| T |\!|\!|_{2} \,\|a\|_2\,\rho(R)^{1/2}\nonumber\\
%&\leq \kappa_0^{1/2}\,\opnorm T\opnorm_{2}\,\rho(R)^{-1+1/2}\,\rho(R)^{1/2}\nonumber\\
&\leq \kappa_0^{1/2}\, |\!|\!| T |\!|\!|_{2}\,.
\end{align}
We consider the integral of $|Ta|$ on the complement of $R^*$: 
\begin{align}\label{fuoriR^*}
\int_{R^{*c}} |Ta|\dir&\leq \int_{(R^{*})^c}\Big|\int_R K(x,y)\,a(y)\dir(y)  \Big|\dir(x)\nonumber\\
&=
\int_{(R^{*})^c}\Big|\int_R [K(x,y)-K(x,x_R)]\,a(y)\dir(y)  \Big|\dir(x)\nonumber\\
&\leq \int_{(R^{*})^c}\int_R |K(x,y)-K(x,x_R)|\,|a(y)|\dir(y)\dir(x)\nonumber\\
&=\int_R|a(y)|\Big( \int_{(R^{*})^c} |K(x,y)-K(x,x_R)|\dir(x) \Big)\dir(y)\nonumber\\
&\leq  \|a\|_1\,\sup_{y\in R}\int_{(R^{*})^c}|K(x,y)-K(x,x_R)|\dir(x)\nonumber\\
&\leq  C\,.
\end{align}
By (\ref{suR^*}) and (\ref{fuoriR^*}), the inequality (\ref{Ta}) follows. 

We shall deduce from (\ref{Ta}) that $T$ is bounded from $H^1$ to $L^1$. Indeed, by \cite[Remark 1.4]{HS} $T$ is bounded from $L^1$ to the Lorentz space $L^{1,\infty}$. Now take a function $f$ in $H^1$ and suppose that $f=\sum_{j=1}^{\infty }\lambda_ja_j$ is an atomic decomposition of $f$ with $\sum_j|\lambda_j|\sim \|f\|_{H^1}$. Define $f_N=\sum_{j=1}^{N}\lambda_ja_j$. Since $f_N$ converges to $f$ in $L^1$, $Tf_N=\sum_{j=1}^N\lambda_jTa_j$ converges to $Tf$ in $L^{1,\infty}$. On the other hand, by (\ref{Ta})
$$\|Tf_N-\sum_{j=1}^{\infty} \lambda_jTa_j\|_1\leq \sum_{j=N+1}^{\infty}|\lambda_j|\,\|Ta_j\|_1 \leq C\,\sum_{j=N+1}^{\infty}|\lambda_j|\,,$$
so that $Tf_N$ converges to $\sum_{j=1}^{\infty}\lambda_jTa_j$ in $L^1$. This implies that $Tf=\sum_{j=1}^{\infty}\lambda_jTa_j\in L^1$ and $\|Tf\|_1\leq C\,\|f\|_{H^1}\,$, i.e., $T$ is bounded from $H^1$ to $L^1$.

Suppose now that (\ref{stimaHdual}) is satisfied. By arguing as before, we may prove that the adjoint operator $T'$ of $T$ is bounded from $H^{1}$ to $L^1$. By duality it follows that $T$ is bounded from $L^{\infty}$ to $BMO$.
\end{proof}
We can apply the previous results to the multipliers of a distinguished Laplacian $\D$ on $S$. Let 
$$X_0=a\partial_a\qquad X_i=a\partial_{x_i}\qquad i=1,\ldots,d$$
be a basis of left-invariant vector fields of the Lie algebra of $S$ and $\D=-\sum_{i=0}^dX_i^2$ be the corresponding left-invariant Laplacian, which is essentially self-adjoint on $L^2$. In \cite{HS} the authors studied a class of multipliers of $\Delta$. More precisely, let $\psi$ be a function in $C^{\infty}_c(\RR^+)$, supported in $[1/4,4]$, such that
$$\sum_{j\in\ZZ}\psi(2^{-j}\lambda)=1\qquad \forall \lambda\in\RR^+\,.$$
Let $m$ be a bounded measurable function on $\RR^+$. We say that $m$ satisfies a {\emph{mixed Mihlin-H\"ormander condition of order $(s_0,s_{\infty})$}} if
$$\sup_{t<1}\|m(t\cdot)\,\psi(\cdot)\|_{H^{s_0}(\RR)}<\infty \qquad{\rm{and}}\qquad \sup_{t\geq 1}\|m(t\cdot)\,\psi(\cdot)\|_{H^{s_{\infty}}(\RR)}<\infty\,,$$
where $H^s(\RR)$ denotes the $L^2$-Sobolev space of order $s$ on $\RR$. By \cite[Theorem 2.4]{HS} if $m$ satisfies a mixed Mihlin-H\"ormander condition of order $(s_0,s_{\infty})$, with $s_0>3/2$ and $s_{\infty}>\max\{3/2,(d+1)/2  \}$, then the operator $m(\D)$ is bounded from $L^1$ to $L^{1,\infty}$ and bounded on $L^p$, for $p$ in $ (1,\infty)$. We now prove a boundedness result for the same multipliers.
\begin{prop}
Suppose that $s_0>3/2$ and $s_{\infty}>\max\{3/2,(d+1)/2  \}$. If $m$ satisfies a mixed Mihlin--H\"ormander condition of order $(s_0,s_{\infty})$, then the operator $m(\D)$ is bounded from $H^1$ to $L^1$ and from $L^{\infty}$ to $BMO$.
\end{prop}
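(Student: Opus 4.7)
The strategy is to apply Theorem \ref{TeolimH1L1}. Since $m$ is bounded, the spectral theorem shows that $m(\D)$ is bounded on $L^2$; it therefore remains only to verify the two integral H\"ormander conditions \eqref{stimaH} and \eqref{stimaHdual} for the integral kernel $K$ of $m(\D)$.

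For \eqref{stimaH}, I would invoke the kernel estimates obtained in the proof of \cite[Theorem 2.4]{HS}. Under a mixed Mihlin--H\"ormander hypothesis with $s_0>3/2$ and $s_\infty>\max\{3/2,(d+1)/2\}$, Hebisch and Steger show that the convolution kernel of $m(\D)$ satisfies the integral H\"ormander condition appearing in the hypothesis of \cite[Theorem 1.2]{HS}. As remarked just before Theorem \ref{TeolimH1L1}, that condition is strictly stronger than \eqref{stimaH}, so \eqref{stimaH} holds automatically.

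For \eqref{stimaHdual}, I would use that $\D$ is essentially self-adjoint on $L^2$, so that the $L^2$-adjoint of $m(\D)$ equals $\overline{m}(\D)$. Since $\overline{m}$ satisfies the same mixed Mihlin--H\"ormander hypothesis as $m$, applying the previous step to $\overline{m}(\D)$ shows that its integral kernel, which is $(x,y)\mapsto\overline{K(y,x)}$, verifies \eqref{stimaH}. After renaming variables this is exactly \eqref{stimaHdual} for $K$.

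Once both conditions are verified, Theorem \ref{TeolimH1L1} yields the $H^1\to L^1$ and $L^\infty\to BMO$ boundedness of $m(\D)$ simultaneously. The main point that needs care is identifying, inside the proof of \cite[Theorem 2.4]{HS}, the explicit integral H\"ormander estimate for the convolution kernel of $m(\D)$ (keeping track of the relationship between convolution kernels and integral kernels in the non-unimodular setting of $(S,d,\rho)$); once this is in hand, the comparison with \eqref{stimaH} is immediate because both conditions are formulated over the same family $\mathcal R$ of \CZ sets and \eqref{stimaH} involves integration over a region no larger than the one used by Hebisch and Steger.
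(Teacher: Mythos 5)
Your proposal is correct and follows essentially the same route as the paper, whose proof simply observes that the kernel of $m(\D)$ satisfies conditions (\ref{stimaH}) and (\ref{stimaHdual}) by \cite[Theorem 2.4]{HS} and then applies Theorem \ref{TeolimH1L1}. Your extra step of deducing (\ref{stimaHdual}) by passing to the adjoint $\overline{m}(\D)$ is a harmless elaboration of what the paper leaves implicit in its citation of \cite{HS}.
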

\begin{proof}
The kernel of the operator $m(\D)$ satisfies the conditions (\ref{stimaH}) and (\ref{stimaHdual}) \cite[Theorem 2.4]{HS}. By Theorem \ref{TeolimH1L1} the result follows.
\end{proof}

\section{Real interpolation}\label {interpolation}
In this section, we study the real interpolation of $H^1$, $BMO$ and the $L^p$ spaces. We first recall some notation of the real interpolation of normed spaces, focusing on the $K$-method. For the details see \cite{BL}. 

Given two compatible normed spaces $X_0$ and $X_1$, for any $t>0$ and for any $x\in X_0+X_1$ we define
$$K(t,x;X_0,X_1)=\inf\{ \|x_0\|_{X_0}+t\|x_1\|_{X_1}:~x=x_0+x_1,\,x_i\in X_i \}\,.$$ 
Take $q$ in $ [1, \infty]$ and $\theta$ in $ (0,1)$. The {\emph{real interpolation space}} $\big[X_0,X_1\big]_{\theta,q}$ is defined as the set of the elements $x\in X_0+X_1$ such that
$$\|x\|_{\theta,q}=\begin{cases}
\Big(\int_0^{\infty}\big[t^{-\theta}\,K(t,x;X_0,X_1)\big]^q \frac{\di t}{t} \Big)^{1/q}&{\rm{if~}} q\in [1,\infty)\\
\|t^{-\theta}\,K(t,x;X_0,X_1)\|_{\infty}&{\rm{if~}} q=\infty\,,
\end{cases}
$$
is finite. The space $\big[X_0,X_1\big]_{\theta,q}$ endowed with the norm $\| \cdot \|_{\theta,q}$ is an exact interpolation space of exponent $\theta$. 

We refer the reader to \cite{Jo} for an overview of the real interpolation results which hold in the classical setting. Our aim is to prove the same results in our context. Note that in our case a maximal characterization of the Hardy space is not avalaible, so that we cannot follow the classical proofs but we shall only use the atomic definition of $H^1$ to prove the results.

\smallskip
We shall first estimate the $K$ functional of $L^{p}$-functions with respect to the couple of spaces $(H^1,L^{p_1})$, with $p_1$ in $(1, \infty]$.
\begin{lem}\label{intpinfty}
Suppose that $1<p< p_1\leq \infty$ and $\frac{1}{p}=1-\theta+\frac{\theta}{p_1}$, with $\theta$ in $ (0,1)$. Let $f$ be in $L^p$. The following hold:
\begin{itemize}
\item[(i)] for every $\lambda>0$ there exists a decomposition $f=g^{\lambda}+b^{\lambda}$ in $L^{p_1}+H^1$ such that
\begin{itemize}
\item[(a)] $\|g^{\lambda}\|_{\infty} \leq C\,\lambda$;
\item[(b)] if $p_1<\infty$, then $\|g^{\lambda}\|_{p_1}^{p_1}\leq C\,\lambda^{p_1-p}\,\|f\|_p^p$;
\item[(c)] $\|b^{\lambda}\|_{H^1}\leq C\,\lambda^{1-p}\,\|f\|_p^p$;
\end{itemize}
\item[(ii)] for any $t>0$, $K(t,f;H^1,L^{p_1})\leq C\,t^{\theta}\,\|f\|_p;$
\item[(iii)] $f\in  [H^1,L^{p_1}]_{\theta,\infty}$ and $\|f\|_{\theta,\infty}\leq C\,\|f\|_p.$ 
\end{itemize}
\end{lem}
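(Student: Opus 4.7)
The strategy is a CZ-style decomposition of $f$ adapted to the $L^p$ setting. I would apply the CZ decomposition of \cite{HS} to the nonnegative integrable function $|f|^p$ at level $\lambda^p$: this yields a family of mutually disjoint \CZ sets $\{R_i\}_i$ with $(|f|^p)_{R_i}\sim\lambda^p$, $\sum_i\rho(R_i)\leq C\lambda^{-p}\|f\|_p^p$, and $|f|^p\leq\kappa_0\lambda^p$ almost everywhere on $(\cup_iR_i)^c$. On the basis of these sets I would take
$$g^{\lambda}=f\,\chi_{(\cup_iR_i)^c}+\sum_i f_{R_i}\,\chi_{R_i},\qquad b^{\lambda}=\sum_i(f-f_{R_i})\,\chi_{R_i}.$$

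For (a), H\"older's inequality gives $|f_{R_i}|\leq(|f|^p)_{R_i}^{1/p}\leq C\lambda$, matching the pointwise bound on the complement; hence $\|g^{\lambda}\|_{\infty}\leq C\lambda$. For (b) with $p_1<\infty$, I would write $|g^{\lambda}|^{p_1}\leq(C\lambda)^{p_1-p}|g^{\lambda}|^p$ and use $\sum_i|f_{R_i}|^p\rho(R_i)\leq\sum_i\int_{R_i}|f|^p\dir$ (again by H\"older) to reduce to $\|f\|_p^p$. For (c), each piece $b^{\lambda}_i:=(f-f_{R_i})\chi_{R_i}$ is supported in the \CZ set $R_i$, has vanishing integral, and satisfies $\|b^{\lambda}_i\|_p\leq 2\|f\chi_{R_i}\|_p\leq C\lambda\,\rho(R_i)^{1/p}$. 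Normalising to $(1,p)$-atoms via $\mu_i=\|b^{\lambda}_i\|_p\,\rho(R_i)^{1-1/p}\leq C\lambda\,\rho(R_i)$ yields
$$\|b^{\lambda}\|_{H^{1,p}}\leq\sum_i\mu_i\leq C\lambda\sum_i\rho(R_i)\leq C\,\lambda^{1-p}\,\|f\|_p^p,$$
and Theorem \ref{coincidono} converts this into the same bound for $\|b^{\lambda}\|_{H^1}$.

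For (ii) the decomposition just produced gives $K(t,f;H^1,L^{p_1})\leq\|b^{\lambda}\|_{H^1}+t\|g^{\lambda}\|_{p_1}\leq C\lambda^{1-p}\|f\|_p^p+Ct\lambda^{1-p/p_1}\|f\|_p^{p/p_1}$, with the obvious modification when $p_1=\infty$. Balancing the two terms gives the optimal choice $\lambda=t^{-1/\beta}\|f\|_p$ with $\beta=p(1-1/p_1)$, and the defining relation $\theta(1-1/p_1)=1-1/p$ turns the resulting estimate into $K(t,f;H^1,L^{p_1})\leq Ct^{\theta}\|f\|_p$. Assertion (iii) is then immediate from the definition $\|f\|_{\theta,\infty}=\sup_{t>0}t^{-\theta}K(t,f;H^1,L^{p_1})$.

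The main technical obstacle is the $H^1$ estimate in (c): the Hardy space here has only an atomic definition, so $b^{\lambda}$ must be split into atoms, and the natural size control of the atomic pieces is at the $L^p$ level rather than the $L^{\infty}$ level. The key input is Theorem \ref{coincidono}, which lets us work in $H^{1,p}$ (where the $L^p$ normalization is available) and then transfer the bound to $H^1$; a direct $(1,\infty)$-atomic decomposition of $b^{\lambda}$ is not obviously within reach.
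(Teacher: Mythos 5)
Your proposal is correct and follows essentially the same route as the paper: the Calder\'on--Zygmund decomposition of $|f|^p$ at height $\lambda^p$, the same choice of $g^{\lambda}$ and $b^{\lambda}=\sum_i(f-f_{R_i})\chi_{R_i}$, the $L^p$-normalised atomic estimate giving $\|b^{\lambda}\|_{H^{1,p}}\leq C\lambda\sum_i\rho(R_i)$ transferred to $H^1$ via Theorem \ref{coincidono}, and the balancing of $\lambda^{1-p}\|f\|_p^p$ against $t\lambda^{1-p/p_1}\|f\|_p^{p/p_1}$ to get the $t^{\theta}$ bound. The only differences are cosmetic (a pointwise interpolation for (b) instead of splitting the integral, and balancing instead of differentiating $G(t,\lambda)$).
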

\begin{proof}
Let $f$ be in $L^p$. We first prove (i). Given a positive $\lambda$, let $\{R_j\}$ be the collection of sets associated with the \CZ decomposition of $|f|^p$ corresponding to the value $\lambda^p$. We write
$$f=g^{\lambda }+b^{\lambda}=g^{\lambda}+\sum_jb^{\lambda}_j=g^{\lambda}+\sum_j\big(f-f_{R_j}\big)\,\chi_{R_j}\,.$$
We then have
$$\|g^{\lambda}\|_{\infty}\leq C\,\lambda\,,\qquad \frac{1}{\rho(R_j)}\int_{R_j}|f|^p\dir \sim  \lambda^p\qquad{\rm{and}}\qquad |f_{R_j}|\leq C\,\lambda     .$$
If $p_1<\infty$, then
\begin{align*}
\|g^{\lambda}\|_{p_1}^{p_1}&\leq \sum_j\int_{R_j}|f_{R_j}|^{p_1}\dir+\int_{(\bigcup R_j)^c}|f|^{p_1}\dir\\
&\leq C\,\lambda^{p_1}\sum_j\rho(R_j)+\int_{(\bigcup R_j)^c}|f|^{p_1-p}\,|f|^p\dir\\
&\leq C\,\lambda^{p_1}\,\frac{\|f\|_p^p}{\lambda^p}+\lambda^{p_1-p}\,\|f\|_p^p\\
&\leq C\,\lambda^{p_1-p}\,\|f\|_p^p\,.
\end{align*} 
We now prove that $b$ is in $H^{1,p}$. For any $j$, $b^{\lambda}_j$ is supported in $R_j$, has vanishing integral and
$$\Big(\int_{R_j}|b^{\lambda}_j|^p \dir \Big)^{1/p}\leq C\,\rho(R_j)^{1/p}\,\lambda=C\,\lambda\,\rho(R_j)\,\rho(R_j)^{-1+1/p}\,.$$
This shows that $b^{\lambda}_j\in H^{1,p}=H^1$ and $\|b^{\lambda}_j\|_{H^1}\leq C\,\lambda\,\rho(R_j)$. Since $b^{\lambda}=\sum_jb^{\lambda}_j$, $b^{\lambda}$ is in $H^{1}$ and 
$$\|b^{\lambda}\|_{H^{1}}\leq C\,\lambda\,\sum_j\rho(R_j)\leq C\,\lambda\,\frac{\|f\|_p^p}{\lambda^p}\,,$$
as required.

We now prove (ii). Fix $t>0$. For any positive $\lambda$, let $f=g^{\lambda}+b^{\lambda}$ be the decomposition of $f$ in $L^{p_1}+H^1$ given by (i). Thus 
\begin{align*}
K(t,f;H^1,L^{p_1})&=\inf \{\|f_0\|_{H^1}+t\,\|f_1\|_{p_1}:~f=f_0+f_1,\,f_0\in H^1,\,f_1\in L^{p_1}  \}\\
&\leq \inf_{\lambda>0} \big( \|b^{\lambda}\|_{H^1}+t\,\|g^{\lambda}\|_{p_1} \big)\\
&\leq C\,\inf_{\lambda>0}\big(\lambda^{1-p}\,\|f\|_p^{p}+t\,\lambda^{1-p/p_1}\|f\|_p^{p/p_1}  \big)\\
&\leq C\,\|f\|_p^{p/{p_1}}\,\inf_{\lambda>0}\big(\lambda^{1-p}\,\|f\|_p^{p(1-1/{p_1})}+t\,\lambda^{1-p/p_1}  \big)\\
&=C\,\|f\|_p^{p/{p_1}}\,\inf_{\lambda>0} G(t,\lambda)\,,
\end{align*}
where $G(t,\lambda)=\lambda^{1-p}\,\|f\|_p^{p(1-1/{p_1})}+t\,\lambda^{1-p/p_1}$. We now compute the infimum of the function $G$ with respect to the variable $\lambda$. Note that
\begin{align*}
\partial_{\lambda}G(t,\lambda)&=(1-p)\lambda^{-p}\,\|f\|_p^{p(1-1/{p_1})}+(1-p/p_1)t\,\lambda^{-p/p_1}\\
&=\lambda^{-p} \big[(1-p)\,\|f\|_p^{p(1-1/{p_1})}+(1-p/p_1)t\,\lambda^{-p/p_1+p}  \big]\,.
\end{align*}
If $p_1<\infty$, then
$$\inf_{\lambda>0} G(t,\lambda)=G\big(t,C_p\|f\|_p\,t^{p_1/{p-pp_1}}\big)=C_p\,\|f\|_p^{1-p/p_1}\,t^{\frac{p_1(p-1)}{p(p_1-1)}}\,.$$
If $p_1=\infty$, then
$$\inf_{\lambda>0} G(t,\lambda)=G\big(t,C_p\|f\|_p\,t^{-1/p}\big)=C_p\,\|f\|_p\,t^{1-1/p}\,.$$
Hence,
$$K(t,f;H^1,L^{p_1})\leq C_p\,\|f\|_p\,t^{\theta}\,,$$
which proves (ii). This implies that $\|t^{-\theta}\,K(t,f;H^1,L^{p_1})\|_{\infty}\leq C_p\,\|f\|_p$, so 
that $f\in [H^1,L^{p_1}]_{\theta,\infty}$ and $\|f\|_{\theta,\infty}\leq C_p\|f\|_p$, as required in (iii).
\end{proof}
\begin{teo}\label{realintH1Lp2}
Suppose that $1<p<p_1\leq \infty$ and $\frac{1}{p}=1-\theta+\frac{\theta}{p_1}$, with $\theta$ in $(0,1)$. Then 
$$\big[H^1,L^{p_1}  \big]_{\theta,p}=L^p\,.$$
\end{teo}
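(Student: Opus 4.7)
The plan is to establish two continuous inclusions whose combination yields $[H^1,L^{p_1}]_{\theta,p}=L^p$ with equivalent norms.

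\emph{Easy inclusion.} For $[H^1,L^{p_1}]_{\theta,p}\hookrightarrow L^p$: every $(1,\infty)$-atom has $L^1$-norm bounded by $1$, so the atomic definition immediately gives $\|h\|_1\leq \|h\|_{H^1}$ for every $h\in H^1$, i.e., the continuous inclusion $H^1\hookrightarrow L^1$. By monotonicity of the $K$-functional in its first factor, $K(t,f;L^1,L^{p_1})\leq C\,K(t,f;H^1,L^{p_1})$ for all $t>0$, hence $[H^1,L^{p_1}]_{\theta,p}\hookrightarrow [L^1,L^{p_1}]_{\theta,p}$. The classical real interpolation of $L^q$-spaces \cite{BL} identifies the right-hand side with $L^p$.

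\emph{Hard inclusion.} For $L^p\hookrightarrow [H^1,L^{p_1}]_{\theta,p}$: Lemma \ref{intpinfty}(iii) delivers only the weaker statement $L^p\hookrightarrow [H^1,L^{p_1}]_{\theta,\infty}$, since the uniform bound $K(t,f;H^1,L^{p_1})\leq C\,t^{\theta}\|f\|_p$ diverges when raised to the $p$-th power and integrated against $\di t/t$. To upgrade the second index from $\infty$ to $p$, I would invoke a reiteration argument. Choose $p_0,p_2$ with $1<p_0<p<p_2<p_1$ (any finite $p_2$ works when $p_1=\infty$) and define $\theta_i\in(0,1)$ by $1/p_i=1-\theta_i+\theta_i/p_1$ for $i=0,2$; then $\theta_0<\theta<\theta_2$. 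A direct computation shows that the unique $\eta\in(0,1)$ satisfying $\frac{1}{p}=\frac{1-\eta}{p_0}+\frac{\eta}{p_2}$ also satisfies $\theta=(1-\eta)\theta_0+\eta\theta_2$. Lemma \ref{intpinfty}(iii) applied at the exponents $p_0$ and $p_2$ yields the continuous inclusions $L^{p_0}\hookrightarrow [H^1,L^{p_1}]_{\theta_0,\infty}$ and $L^{p_2}\hookrightarrow [H^1,L^{p_1}]_{\theta_2,\infty}$. The Lions--Peetre reiteration theorem \cite{BL} then gives
$$\big[[H^1,L^{p_1}]_{\theta_0,\infty},\,[H^1,L^{p_1}]_{\theta_2,\infty}\big]_{\eta,p}=[H^1,L^{p_1}]_{\theta,p},$$
while the classical Marcinkiewicz interpolation identifies $[L^{p_0},L^{p_2}]_{\eta,p}=L^p$. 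Combining these with the monotonicity of real interpolation under continuous embedding of both factors of the couple produces $L^p\hookrightarrow [H^1,L^{p_1}]_{\theta,p}$.

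The main obstacle is precisely the gap between the $[\cdot]_{\theta,\infty}$ output of Lemma \ref{intpinfty} and the target space $[\cdot]_{\theta,p}$: a direct use of the uniform estimate on $K(t,f;H^1,L^{p_1})$ is not sharp enough to ensure integrability in $t$, reflecting the absence of a maximal-function characterization of $H^1$ in this setting. The reiteration argument circumvents this by applying the lemma at two auxiliary exponents $p_0<p<p_2$ straddling $p$ and then recovering the sharper second index through interpolation between the resulting $[\cdot]_{\theta_0,\infty}$ and $[\cdot]_{\theta_2,\infty}$ spaces.
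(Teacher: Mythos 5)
Your proposal is correct and follows essentially the same route as the paper: the easy inclusion via $H^1\subset L^1$ and classical real interpolation of $L^q$ spaces, and the reverse inclusion by applying Lemma \ref{intpinfty}(iii) at two auxiliary exponents straddling $p$ and then using the reiteration theorem of \cite{BL}. The only differences are notational (the paper calls your $p_0,p_2$ by $r,s$) and in the attribution of $[L^{p_0},L^{p_2}]_{\eta,p}=L^p$, which the paper also takes from \cite[Theorem 5.2.1]{BL}.
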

\begin{proof}
Since $H^1\subset L^1$, we have that $\big[H^1,L^{p_1}  \big]_{\theta,p}\subset \big[L^1,L^{p_1}  \big]_{\theta,p}=L^p$ \cite[Theorem 5.2.1]{BL}. It remains to prove the converse inclusion. 

To do so, we choose $r,\,s,\,\theta_0,\,\theta_1$ such that $1<r<p<s<p_1$, $\frac{1}{r}=1-\theta_0+\frac{\theta_0}{p_1}$ and $\frac{1}{s}=1-\theta_1+\frac{\theta_1}{p_1}$. By Lemma \ref{intpinfty} 
$$L^{r}\subset [H^1,L^{p_1}]_{\theta_0,\infty}\qquad {\rm{and}}\qquad L^{s}\subset [H^1,L^{p_1}]_{\theta_1,\infty}\,.$$
Choose $\eta$ in $(0,1)$ such that $\frac{1}{p}= \frac{1-\eta}{r}+\frac{\eta}{s}$. Then by \cite[Theorem 5.2.1]{BL} 
$$L^p=[L^{r},L^{s}]_{\eta,p}\subset \big[[H^1,L^{p_1}]_{\theta_0,\infty},[H^1,L^{p_1}]_{\theta_1,\infty}   \big]_{\eta,p}\,.$$
It is easy to show that $\theta=(1-\eta)\,\theta_0+\eta\,\theta_1$, so that by the reiteration theorem \cite[Theorem 3.5.3]{BL}
$$\big[[H^1,L^{p_1}]_{\theta_0,\infty},[H^1,L^{p_1}]_{\theta_1,\infty}   \big]_{\eta,p}=[H^1,L^{p_1}]_{\theta,p}\,.$$
Thus $L^p\subset[H^1,L^{p_1}]_{\theta,p}$, as required.
\end{proof}
We shall apply the duality theorem \cite[Theorem 3.7.1]{BL} to deduce a corresponding interpolation result involving $BMO$ and the $L^p$ spaces. To do so, we shall need the following technical lemma.
\begin{lem}\label{dense}
For any $p_1$ in $ (1,\infty)$, $H^1\cap L^{p_1}$ is dense in $H^1$ and in $L^{p_1}$.
\end{lem}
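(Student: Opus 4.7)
The density of $H^1 \cap L^{p_1}$ in $H^1$ follows immediately from the atomic definition: every $(1,\infty)$-atom $a$ satisfies $\|a\|_\infty \leq \rho(R)^{-1}$ on a \CZ set $R$ of finite measure, so $\|a\|_{p_1} \leq \rho(R)^{1/p_1 - 1} < \infty$. Hence $H^{1,\infty}_{\mathrm{fin}}$ is contained in $L^{p_1}$, and this subspace is already dense in $H^1$ by definition, giving the first half of the statement.

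For density in $L^{p_1}$, the plan is to approximate $f \in L^{p_1}$ by bounded, compactly supported functions with vanishing integral, since any such function is, up to normalization, a $(1,\infty)$-atom and therefore lies in $H^1 \cap L^{p_1}$. First I would truncate, setting $f_n := f\,\chi_{B(e,n)}\,\chi_{\{|f|\leq n\}}$, so that each $f_n$ is in $L^\infty$ with compact support and $f_n \to f$ in $L^{p_1}$. For each $n$ I would then choose a \CZ set $R_n$ containing $\operatorname{supp}(f_n)$ with $\rho(R_n)$ as large as needed (see the obstacle below), and correct the mean by defining
\[
\tilde f_n := f_n - c_n\,\rho(R_n)^{-1}\,\chi_{R_n},\qquad c_n := \int f_n \dir.
\]
Then $\tilde f_n$ is bounded, supported in $R_n$, and has vanishing integral, so a suitable scalar multiple of $\tilde f_n$ is a $(1,\infty)$-atom and in particular $\tilde f_n \in H^1 \cap L^{p_1}$. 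H\"older's inequality gives $\|\tilde f_n - f_n\|_{p_1} = |c_n|\,\rho(R_n)^{1/p_1 - 1}$, which I can force to $0$ by choosing, say, $\rho(R_n) \geq n\,(1+|c_n|)^{p_1/(p_1-1)}$, since $1/p_1 - 1 < 0$. The triangle inequality then yields $\tilde f_n \to f$ in $L^{p_1}$.

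The one genuinely technical point in the argument is the existence, for every compact set $K \subset S$, of a \CZ set of arbitrarily large measure containing $K$. This should reduce to a direct geometric unpacking of Definition \ref{Czsets}: given a hyperbolic ball $B(e,r_0) \supseteq K$, one picks the parameter $r$ large enough that the height interval $[a\nep^{-r},a\nep^r]$ covers the $a$-range of the ball, then selects a dyadic cube $Q$ of side $L \in [a\nep^{2r},a\nep^{8r})$ large enough to contain the $x$-projection; since $\rho(R) \sim L^d \cdot r$, the measure of $R$ grows without bound as $r \to \infty$, as required.
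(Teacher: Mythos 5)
Your argument is correct and follows essentially the same route as the paper: density in $H^1$ comes from $H^{1,\infty}_{\mathrm{fin}}\subset L^{p_1}$, and density in $L^{p_1}$ comes from truncating to bounded, compactly supported functions and then correcting the mean over a large \CZ set, so that the approximants are exactly the multiples of $(1,\infty)$-atoms (the paper's space $L^{\infty}_{c,0}$). Your ``technical point'' about \CZ sets of arbitrarily large measure containing a given compact set is the same fact the paper uses in the duality theorem via the sets $R_n=Q_n\times[\nep^{-n},\nep^{n}]$ with $\bigcup_n R_n=S$, so your proof just makes explicit what the paper leaves as ``easy to see.''
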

\begin{proof}
Since $H^1_{\rm{fin}}$ is contained in $ H^1\cap L^{p_1}$ and $H^1_{\rm{fin}}$ is dense in $H^1$, it is obvious that $ H^1\cap L^{p_1}$ is dense in $H^1$.

It remains to prove that $H^1\cap L^{p_1}$ is dense in $L^{p_1}$.

Let $L^{\infty}_{c,0}$ denote the space of all functions in $L^{\infty}$ with compact support and integral $0$. If $f$ is in $ L^{\infty}_{c,0}$, then $f$ is in $L^{p_1}$ and $f$ is a multiple of a $(1,\infty)$-atom, so that $f\in H^1$. Thus $ L^{\infty}_{c,0}\subset H^1\cap L^{p_1}$. It is easy to see that
\begin{itemize}
\item[(i)] $L^{\infty}_{c,0}$ is dense in $ L^{\infty}_{c}$ with respect to the $L^{p_1}$-norm;
\item[(ii)] $L^{\infty}_c$ is dense in $L^{p_1}$, since $L^{\infty}_c$ contains $C_c$ which is dense in $L^{p_1}$.
%\item[(ii)] $L^{\infty}_{c,0}$ is dense in $ L^{\infty}_{c}$ with respect to the $L^{p_1}$-norm. %Indeed, let $f$ be a function in $ L^{\infty}_{c}$ and $\varepsilon>0$. We find a function $g_{\varepsilon}$ in $ L^{\infty}_{c,0}$ such that $\|f- g_{\varepsilon}\|_{p_1}<\varepsilon$. Let $B_{\varepsilon}$ be a ball in $S$ and $c_{\varepsilon}\in \CC$ and define $g_{\varepsilon}=f+c_{\varepsilon}\,\chi_{B_{\varepsilon}}$. Obviously $g_{\varepsilon}\in L^{\infty}_c$. We choose $c_{\varepsilon}$ and $B_{\varepsilon}$ in such a way that
%$$\int g_{\varepsilon}\dir=\int f\dir+c_{\varepsilon}\,\rho(B_{\varepsilon})=0\,,$$
%and
%$$\int \big|g_{\varepsilon}-f\big|^{p_1}\dir =|c_{\varepsilon}|^{p_1}\,\rho(B_{\varepsilon})<\varepsilon^{p_1}\,.$$
%It suffices to choose $c_{\varepsilon}=-\frac{\int f\dir}{\rho(B_{\varepsilon})}$ and $B_{\varepsilon}$ large enough to verify
%$$\frac{\Big|\int f\dir\Big|^{p_1}}{\rho(B_{\varepsilon})^{p_1}}\rho(B_{\varepsilon})<\varepsilon^{p_1}\,.$$
\end{itemize}
Thus $L^{\infty}_{c,0}$ is dense in $L^{p_1}$. This implies that $H^1\cap L^{p_1}$ is dense in $L^{p_1}$, as required.
\end{proof}
\begin{coro}\label{realintBMOLq2}
Suppose that $1<q_1<q<\infty$ and $\frac{1}{q}=\frac{1-\theta}{q_1}$, with $\theta$ in $ (0,1)$. Then
$$\big[L^{q_1},BMO  \big]_{\theta,q}=L^q\,.$$
\end{coro}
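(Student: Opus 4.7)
The approach is pure duality: I will combine Theorem \ref{realintH1Lp2} with the duality theorem for the real method of interpolation \cite[Theorem 3.7.1]{BL}, the identification $(H^1)^*=BMO$ from Theorem \ref{dualitytheorem}, and the density statement Lemma \ref{dense}. The symmetry $[X_0,X_1]_{\theta,q}=[X_1,X_0]_{1-\theta,q}$ will then flip the order of the couple to match the desired form.

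More precisely, given $1<q_1<q<\infty$ and $\theta\in(0,1)$ with $\frac{1}{q}=\frac{1-\theta}{q_1}$, I set $p_1=q_1'$ and $p=q'$, so that $1<p<p_1<\infty$, and put $\eta=1-\theta\in(0,1)$. A direct check shows $\frac{1}{p}=1-\eta+\frac{\eta}{p_1}$ (equivalently, $\eta=q_1/q$), so Theorem \ref{realintH1Lp2} yields
\begin{equation*}
\bigl[H^1,L^{p_1}\bigr]_{\eta,p}=L^p.
\end{equation*}

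Next, by Lemma \ref{dense} the intersection $H^1\cap L^{p_1}$ is dense in both $H^1$ and $L^{p_1}$, which is the hypothesis needed to apply \cite[Theorem 3.7.1]{BL} (the other hypothesis $p<\infty$ is automatic since $p=q'\in(1,\infty)$). That theorem, together with $(H^1)^*=BMO$ from Theorem \ref{dualitytheorem} and the standard duality $(L^{p_1})^*=L^{p_1'}=L^{q_1}$, gives
\begin{equation*}
L^q=(L^p)^*=\bigl[H^1,L^{p_1}\bigr]_{\eta,p}^{*}=\bigl[BMO,L^{q_1}\bigr]_{\eta,q}.
\end{equation*}
Finally, applying the symmetry of the real interpolation functor with $1-\eta=\theta$, I conclude
\begin{equation*}
L^q=\bigl[BMO,L^{q_1}\bigr]_{\eta,q}=\bigl[L^{q_1},BMO\bigr]_{\theta,q},
\end{equation*}
which is the claim.

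The only genuinely nontrivial ingredient is the density hypothesis required by \cite[Theorem 3.7.1]{BL}; this is precisely what Lemma \ref{dense} was designed to provide, and it is the point where one must use the specific atomic structure of $H^1$ (via the fact that compactly supported, mean-zero $L^\infty$ functions belong to $H^1\cap L^{p_1}$ and are dense in $L^{p_1}$). The parameter bookkeeping—matching $\eta$, $\theta$, $p$, $p_1$, $q$, $q_1$—is routine but needs to be done carefully so that the exponent identity $\frac{1}{q}=\frac{1-\theta}{q_1}$ comes out exactly.
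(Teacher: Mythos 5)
Your proposal is correct and follows essentially the same route as the paper: apply Theorem \ref{realintH1Lp2} to the conjugate exponents, invoke Lemma \ref{dense} to satisfy the density hypothesis of the duality theorem \cite[Theorem 3.7.1]{BL} together with the identification of $(H^1)'$ with $BMO$, and finish by the symmetry of the interpolation couple. The parameter bookkeeping ($\eta=1-\theta$, $p=q'$, $p_1=q_1'$) matches the paper's computation exactly.
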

\begin{proof}
Let $p$ and $p_1$ be the conjugate exponents of $q$ and $q_1$, respectively. Then $1<p<p_1<\infty$ and $\frac{1}{p}=\theta+\frac{1-\theta}{p_1}$. By Theorem \ref{realintH1Lp2}$$\big[H^1,L^{p_1}  \big]_{1-\theta,p}=L^{p}\,.$$
Since by Lemma \ref{dense} $H^1\cap L^{p_1}$ is dense in $H^1$ and in $L^{p_1}$, we can apply the duality theorem \cite[Theorem 3.7.1]{BL} and conclude that
$$L^{q}=L^{p'}=\big[H^1,L^{p_1}  \big]'_{1-\theta,p}=\big[(H^1)',(L^{p_1})'  \big]_{1-\theta,p'}=\big[BMO,L^{q_1}  \big]_{1-\theta,q}\,.$$
By \cite[Theorem 3.4.1]{BL} it follows that
$$\big[L^{q_1},BMO  \big]_{\theta,q}=\big[BMO,L^{q_1}  \big]_{1-\theta,q}=L^q\,,$$
as required.
\end{proof}
Note that Theorem \ref{realintH1Lp2} also concerns the limit case $p_1=\infty$, showing that $[H^1,L^{\infty}]_{\theta,p}=L^p$, where $1/p=1-\theta$. The Corollary \ref{realintBMOLq2} does not give a result for the limit case $q_1=1$, since it is not possible to deduce it by applying \cite[Theorem 3.7.1]{BL}. To find the interpolation space $[L^1,BMO]_{\theta,q}$, where $1/q=1-\theta$, we shall apply the reiteration theorem by T. Wolff. To do so we shall need the following technical lemma.
\begin{lem}\label{L1capBMO}
For any $p$ in $ (1,\infty)$, $L^1\cap BMO$ is contained in $ L^p$.
\end{lem}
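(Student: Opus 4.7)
The plan is to control the distribution function $\lambda_f(t) := \rho(\{x \in S : |f(x)| > t\})$ for $f \in L^1 \cap BMO$, and then use the layer-cake formula
$$\|f\|_p^p = p\int_0^{\infty} t^{p-1}\,\lambda_f(t)\di t.$$
For small $t$ (say $0 < t \leq 1$), Chebyshev's inequality gives $\lambda_f(t) \leq \|f\|_1/t$, and hence $\int_0^1 t^{p-1}\lambda_f(t)\di t \leq \|f\|_1/(p-1)$, which is finite for every $p > 1$. The whole game is therefore to prove that $\lambda_f(t)$ decays \emph{exponentially} in $t$ for large $t$, at which point $\int_1^{\infty} t^{p-1}\lambda_f(t)\di t$ will be manifestly finite for every $p$.

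For the large $t$ regime I would exploit the \CZ decomposition recalled in Section~\ref{H1}. Fix $t$ large and apply this decomposition to the function $|f| \in L^1$ at level $t/\kappa_0$, obtaining a family of mutually disjoint \CZ sets $\{R_i\}$ such that $|f(x)| \leq t$ for a.e.\ $x \notin \bigcup_i R_i$, the averages satisfy $\frac{1}{\rho(R_i)}\int_{R_i}|f|\dir \sim t/\kappa_0$, and $\sum_i \rho(R_i) \lesssim \|f\|_1/t$. In particular $|f_{R_i}| \leq \frac{1}{\rho(R_i)}\int_{R_i}|f|\dir \leq Mt$ for some absolute constant $M$. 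Choosing a constant $K > 2M$, the set $\{|f| > Kt\}$ is contained in $\bigcup_i R_i$, and on $R_i \cap \{|f| > Kt\}$ we have the triangle-inequality bound
$$|f(x) - f_{R_i}| \geq |f(x)| - |f_{R_i}| \geq Kt - Mt = (K-M)t.$$

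Now apply Corollary \ref{BMOq}(i) on each $R_i$ to this last event: for some $\eta, A > 0$,
$$\rho\bigl(\{x \in R_i : |f(x) - f_{R_i}| > (K-M)t\}\bigr) \leq A\,\nep^{-\eta(K-M)t/\|f\|_*}\,\rho(R_i).$$
Summing over $i$ and using $\sum_i \rho(R_i) \lesssim \|f\|_1/t$,
$$\lambda_f(Kt) \leq A\,\nep^{-\eta(K-M)t/\|f\|_*}\,\sum_i\rho(R_i) \leq C\,\frac{\|f\|_1}{t}\,\nep^{-c\,t/\|f\|_*},$$
with $c = \eta(K-M)$. Substituting $s = Kt$ shows that for $s$ large, $\lambda_f(s)$ is dominated by $C\|f\|_1 s^{-1}\exp(-c's/\|f\|_*)$, so $\int_{1}^{\infty} s^{p-1}\lambda_f(s)\di s < \infty$ for every $p \in (1,\infty)$. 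Combining this with the small-$t$ estimate yields $f \in L^p$, as desired.

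The main technical obstacle is making the reduction from $\{|f| > Kt\}$ to $\{|f - f_{R_i}| > (K-M)t\}$ watertight: one must verify that the \CZ machinery from \cite{HS} (which decomposes integrable functions, not just locally integrable ones) genuinely furnishes the control $|f_{R_i}| \leq Mt$ with a uniform constant $M$ independent of $t$ and $i$, so that the exponent $(K-M)t$ is truly linear in $t$. Once this is in place, the John--Nirenberg inequality in the form of Corollary \ref{BMOq}(i) does all the remaining work, and the rest is a routine layer-cake computation.
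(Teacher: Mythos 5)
Your argument is correct, but it follows a genuinely different route from the paper. The paper's proof is a duality argument: applying Lemma \ref{intpinfty}(i) with $\lambda=\|f\|_{p'}$ it shows that $L^{p'}\subset L^{\infty}+H^1$ with a norm bound, then dualizes to get $L^p=(L^{p'})'\supset (L^{\infty}+H^1)'$ and observes that $(L^{\infty}+H^1)'\supset L^1\cap BMO$. You instead argue directly on the distribution function: the Calder\'on--Zygmund decomposition of Section \ref{H1} at level comparable to $t$ confines $\{|f|>Kt\}$ to the bad sets $R_i$ (up to a null set), the uniform comparability of the averages of $|f|$ on the $R_i$ with the level gives $|f_{R_i}|\leq Mt$, and Corollary \ref{BMOq}(i) then yields exponential decay of $\rho(\{|f|>Kt\})$, which together with the Chebyshev bound for small $t$ and the layer-cake formula gives $f\in L^p$. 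The technical point you flag is indeed the only thing to check, and it is already supplied by the decomposition as recalled in Section \ref{H1}: the average of $|f|$ on each $R_i$ is comparable with the level $\alpha$ with constants independent of $\alpha$ and $i$ (the stopping-time construction gives an upper bound of the form $2^d\kappa_0\alpha$), and the lower bound on these averages plus disjointness gives $\sum_i\rho(R_i)\lesssim\|f\|_1/t$, exactly as you use. (Two trivial caveats: the case $\|f\|_*=0$ forces $f=0$ since $\rho(S)=\infty$, and one should take $K\geq\max\{1,2M\}$ so that both the inclusion in $\bigcup_iR_i$ and the triangle inequality apply.) What the two approaches buy: yours is self-contained, bypasses the duality theorem and the identification of $(L^{\infty}+H^1)'$, and in fact yields a quantitative estimate of the form $\|f\|_p\leq C_p\,\|f\|_1^{1/p}\,(\|f\|_1+\|f\|_*)^{1/p'}$, hence a continuous embedding; the paper's proof is shorter given that Lemma \ref{intpinfty} and the $H^1$--$BMO$ duality are already in place.
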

\begin{proof}
Let $p'$ denote the conjugate exponent of $p$. For any $f$ in $L^{p'}$, by applying Lemma \ref{intpinfty}(i) with $\lambda=\|f\|_{p'}$, we may decompose $f$ into a sum $f=g+b$ such that $\|g\|_{\infty}\leq C_p\,\|f\|_{p'}$ and $\|b\|_{H^1}\leq C_p\,\|f\|_{p'}$. Thus $f\in  L^{\infty}+H^1$ and
$$\|f\|_{L^{\infty}+H^1}\leq C_p\,\|f\|_{p'}\,.$$
This proves that $L^{p'}\subset  L^{\infty}+H^1 $. By duality we deduce that $L^p\supset \big(L^{\infty}+H^1\big)'$. It is easy to show that $\big(L^{\infty}+H^1\big)'\supset L^1\cap BMO$, which concludes the proof of the lemma.  

%Let $h$ be a function in $L^1\cap BMO$. We prove that $h$ identifies a bounded linear functional on $L^{\infty}+H^1$. Indeed, given $g\in L^{\infty}+H^1$ we may consider a decomposition $g=g_1+g_2$, such that $g_1\in L^{\infty}$ and $g_2\in H^1$. We now define
%$$\langle f,g\rangle:=_{L^1}\langle f,g_1\rangle_{L^{\infty}}+_{BMO}\langle f,g_2\rangle_{H^1}\,.$$
%We have that
%\begin{align*}
%\big|\langle f,g\rangle  \big|&\leq \|f\|_1\,\|g_1\|_{L^{\infty}}+\|f\|_{BMO}\,\|g_2\|_{H^1}\\
%&\leq \|f\|_{L^1\cap BMO}\big(\|g_1\|_{L^{\infty}}+ \|g_2\|_{H^1} \big)\,.
%\end{align*}
%By taking the infimum over all such decompositions of $g$ we deduce that
%$$\big|\langle f,g\rangle  \big|\leq \|f\|_{L^1\cap BMO}\,\|g\|_{L^{\infty}+H^1}\,.$$
%Thus $f$ may be identified with an element in $\big(L^{\infty}+H^1\big)'$ and 
%$$\|f\|_{\big(L^{\infty}+H^1\big)'}\leq \|f\|_{L^1\cap BMO}\,,$$
%as required.
\end{proof}
%An alternative proof.
%\begin{proof}
%Let $f$ be a function in $L^1\cap BMO$ and let $\Omega=\{x\in S:~|f(x)|>1  \}$. We define $h=f\,\chi_{\Omega}$ and $g=f-h$. Obviously $|g|\leq 1$ and
%$$\|g\|_p=\Big(|g|^p\dir  \Big)^{1/p}\leq \Big(|g|\dir  \Big)^{1/p}\leq  \Big(|f|\dir  \Big)^{1/p}\leq \|f\|_1^{1/p}\,.$$
%Note also that since $g\in L^{\infty}$ and $f\in BMO$, we have that $h\in BMO\cap L^1$. Moreover $h$ is supported in a set of finite measure. We can find a sequence of disjoint \CZ sets $\{R_i\}$ such that ${\rm{supp}}h\subseteq \bigcup_iR_i$ and
%$$\|h\|_{L^p(R_i)}\leq \|h-h_{R_i}\|_{L^p(R_i)}+\|h_{R_i}\|_{L^p(R_i)} \leq \|h\|_{BMO}\,\rho(R_i)^{1/p}+\|h\|_1\,\rho(R_i)^{1/p}\leq C\,\rho(R_i)^{1/p}\,.$$
%By summation over $i$ we get
%$$\|h\|_p=\sum_i\|h\|_{L^p(R_i)}\leq C\,\sum_i\rho(R_i)^{1/p}\,.$$
%We may choose the set $R_i$ in such a way that $\rho(R_i)\asymp 2^{-ip}$, so that the result follows.
%\end{proof}
We can now apply the reiteration theorem by T.~Wolff \cite[Theorem 1]{W} to study the real interpolation between $L^1$ and $BMO$.
\begin{prop}\label{realintL1BMO}
Suppose that $1<q<\infty$ and $\frac{1}{q}={1-\psi}$, with $\psi$ in $ (0,1)$. Then
$$\big[L^{1},BMO  \big]_{\psi,q}=L^{q}\,.$$
\end{prop}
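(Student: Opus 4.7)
The plan is to invoke the reiteration theorem of T.~Wolff \cite[Theorem 1]{W}: if $A_1,A_2,A_3,A_4$ are compatible Banach spaces with $A_2\cap A_3$ dense in both $A_2$ and $A_3$, and if
$A_2=[A_1,A_3]_{\varphi_0,p_2}$ and $A_3=[A_2,A_4]_{\psi_0,p_3}$,
then $A_2=[A_1,A_4]_{\varphi_1,p_2}$ with $\varphi_1=\varphi_0\psi_0/(1-\varphi_0+\varphi_0\psi_0)$. Given $q\in(1,\infty)$ with $\psi=1-1/q$, I would set $q_1:=q$, pick any auxiliary exponent $q_2\in(q,\infty)$, and apply Wolff's theorem to the four spaces $A_1=L^1$, $A_2=L^{q_1}$, $A_3=L^{q_2}$, $A_4=BMO$.

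The two interpolation inputs are already in hand. Classical real interpolation of $L^p$-spaces on $(S,\rho)$ \cite[Theorem 5.2.1]{BL} gives
$$L^{q_1}=[L^1,L^{q_2}]_{\varphi_0,q_1},\qquad \varphi_0=\frac{1-1/q_1}{1-1/q_2},$$
while Corollary \ref{realintBMOLq2} yields
$$L^{q_2}=[L^{q_1},BMO]_{\psi_0,q_2},\qquad \psi_0=1-\frac{q_1}{q_2}.$$

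The density hypothesis required by Wolff is straightforward, since $L^{\infty}_{c}\subset L^{q_1}\cap L^{q_2}$ and $L^{\infty}_{c}$ is dense in every $L^{p}$ with $p<\infty$. The compatibility of the quadruple inside a common Hausdorff topological vector space is precisely what Lemma \ref{L1capBMO} ensures: since $L^1\cap BMO\subset L^p$ for all $p\in(1,\infty)$, the four spaces $L^1,\,L^{q_1},\,L^{q_2},\,BMO$ may all be regarded as continuously embedded in the single Banach space $L^1+BMO$, so that the $K$-functional associated with the couple $(L^1,BMO)$ is well-defined on all the relevant elements. Substituting the explicit values of $\varphi_0$ and $\psi_0$ into Wolff's formula reduces, after a short algebraic simplification, to $\varphi_1=1-1/q_1=\psi$, whence
$$L^{q}=L^{q_1}=[L^1,BMO]_{\psi,q},$$
which is the desired identification.

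The main obstacle is the bookkeeping needed to verify the compatibility/density hypotheses of Wolff's theorem in our non-doubling setting; this is exactly where Lemma \ref{L1capBMO} is decisive. The reduction of $\varphi_1$ to $\psi$ is a purely mechanical algebraic calculation, and the final conclusion is visibly independent of the auxiliary choice of $q_2$, as it must be.
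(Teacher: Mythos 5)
Your proposal is correct and is essentially the paper's own argument: both proofs feed the classical identity from \cite[Theorem 5.2.1]{BL} and Corollary \ref{realintBMOLq2} into Wolff's reiteration theorem \cite[Theorem 1]{W}, the only (immaterial) difference being that you take an auxiliary exponent $q_2>q$ and read off the conclusion for $A_2=L^q$, whereas the paper takes $r\in(1,q)$ and reads off the conclusion for $A_3=L^q$. One small precision: Lemma \ref{L1capBMO} is invoked not for compatibility or density but to verify Wolff's actual hypothesis $A_1\cap A_4\subset A_2\cap A_3$, i.e. $L^1\cap BMO\subset L^{q}\cap L^{q_2}$, which your argument does supply even though it is phrased differently.
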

\begin{proof}
We choose $r$ in $(1,q)$. By \cite[Theorem 5.2.1]{BL} and Corollary \ref{realintBMOLq2}
$$\big[L^{1},L^q  \big]_{\phi,r}=L^r\qquad{\rm{and}}\qquad \big[L^{r},BMO  \big]_{\theta,q}=L^q\,,$$
where $\frac{1}{r}=1-\phi+\frac{\phi}{q}$ and $\frac{1}{q}=\frac{1-\theta}{r}$. By Lemma \ref{L1capBMO}, $L^1\cap BMO\subset L^r\cap L^q$; then we can apply the reiteration theorem \cite[Theorem 1]{W} to conclude that
$$ \big[L^{1},BMO  \big]_{\eta,q}=L^q\,,  $$
where $\psi=\frac{\theta}{1-\phi+\phi\theta}$. It is easy to verify that $\frac{1}{q}=1-\psi$, as required.
\end{proof}
We easily deduce a real interpolation result for $H^1$ and $BMO$.
\begin{coro}\label{realintH1BMO}
Suppose that $1<q<\infty$ and $\frac{1}{q}={1-\psi}$, with $\psi$ in $(0,1)$. Then
$$\big[H^{1},BMO  \big]_{\psi,q}=L^{q}\,.$$
\end{coro}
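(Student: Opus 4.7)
The plan is to sandwich $[H^{1},BMO]_{\psi,q}$ between two copies of $L^{q}$ by pure monotonicity of the $K$-functional in each of the two endpoint spaces, invoking only Proposition~\ref{realintL1BMO} and Theorem~\ref{realintH1Lp2}. No appeal to Wolff's reiteration is needed.

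For the inclusion $[H^{1},BMO]_{\psi,q}\subset L^{q}$, I would first note the continuous embedding $H^{1}\hookrightarrow L^{1}$: any $(1,\infty)$-atom has $L^{1}$-norm at most $1$, so $\|f\|_{1}\leq\|f\|_{H^{1}}$. Consequently every decomposition $f=f_{0}+f_{1}$ admissible for the couple $(H^{1},BMO)$ is automatically admissible for $(L^{1},BMO)$, and
$$K(t,f;L^{1},BMO)\leq K(t,f;H^{1},BMO)\qquad\forall t>0.$$
Taking the weighted $L^{q}$-norm in $t$ gives $[H^{1},BMO]_{\psi,q}\hookrightarrow[L^{1},BMO]_{\psi,q}$, and the right-hand side equals $L^{q}$ by Proposition~\ref{realintL1BMO}.

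For the reverse inclusion $L^{q}\subset[H^{1},BMO]_{\psi,q}$, I would exploit the trivial embedding $L^{\infty}\hookrightarrow BMO$ (since $|f-f_{R}|\leq 2\|f\|_{\infty}$, one has $\|f\|_{*}\leq 2\|f\|_{\infty}$). The same endpoint-monotonicity argument, now applied to the second slot, yields
$$K(t,f;H^{1},BMO)\leq 2\,K(t,f;H^{1},L^{\infty})\qquad\forall t>0,$$
and hence $[H^{1},L^{\infty}]_{\psi,q}\hookrightarrow[H^{1},BMO]_{\psi,q}$. Now Theorem~\ref{realintH1Lp2} in the limit case $p_{1}=\infty$, $p=q$, $\theta=\psi$ (together with the defining relation $1/q=1-\psi$) asserts exactly $[H^{1},L^{\infty}]_{\psi,q}=L^{q}$; this is precisely the limit case emphasised in the remark following Corollary~\ref{realintBMOLq2}. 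Combining the two embeddings closes the chain $L^{q}\hookrightarrow[H^{1},BMO]_{\psi,q}\hookrightarrow L^{q}$ with equivalent norms.

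I do not foresee a genuine obstacle: the argument is purely formal interpolation bookkeeping, and the substantive work has already been carried out in Theorem~\ref{realintH1Lp2} (atomic-type $K$-functional estimates) and in the Wolff-reiteration proof of Proposition~\ref{realintL1BMO}. The only thing to double-check is that the $K$-functional inequalities above, which come from the continuous inclusions $H^{1}\hookrightarrow L^{1}$ and $L^{\infty}\hookrightarrow BMO$, are used with the correct orientation of monotonicity in each slot — a minor but essential sanity check.
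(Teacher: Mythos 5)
Your argument is exactly the paper's proof: sandwich $[H^{1},BMO]_{\psi,q}$ using the embeddings $H^{1}\subset L^{1}$ and $L^{\infty}\subset BMO$, then invoke Proposition~\ref{realintL1BMO} for the upper inclusion and the limit case $p_{1}=\infty$ of Theorem~\ref{realintH1Lp2} for the lower one. The explicit $K$-functional monotonicity inequalities you write out are just the standard justification of the interpolation functor's monotonicity, so the proposal is correct and essentially identical to the paper's argument.
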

\begin{proof}
Since $H^1\subset L^1$, $\big[H^{1},BMO  \big]_{\psi,q}\subset \big[L^{1},BMO  \big]_{\psi,q}=L^q$. On the other hand, since $L^{\infty}\subset BMO$,
$$L^q=\big[H^{1},L^{\infty}  \big]_{\psi,q}\subset \big[H^{1},BMO  \big]_{\psi,q}\,,$$
as required.
\end{proof}
By applying the reiteration theorem we may also deduce some real interpolation results involving Lorentz spaces. For the definition of the Lorentz spaces $L^{p,q}$ we refer the reader to \cite[Chapter V]{SW}.
\begin{coro}
The following hold:
\begin{itemize}
\item[(i)] if $1<p<p_1\leq \infty$, $1\leq q,\,q_1\leq \infty$, $\theta\in(0,1)$ and $\frac{1}{p}=1-\theta+\frac{\theta}{p_1}$, then 
$$\big[H^1,L^{p_1,q_1}  \big]_{\theta,q}=L^{p,q}\,;$$
\item[(ii)] if $1\leq s,s_1\leq \infty$, $1\leq q_1<q<\infty$, $\theta\in (0,1)$ and $\frac{1}{q}=\frac{1-\theta}{q_1}$, then
$$\big[L^{q_1,s_1},BMO  \big]_{\theta,s}=L^{q,s}\,;$$
\item[(iii)] if $1<q<\infty$, $\theta\in (0,1)$ and $\frac{1}{p}={1-\theta}$, then
$$\big[H^1,BMO  \big]_{\theta,q}=L^{p,q}\,.$$
\end{itemize}
\end{coro}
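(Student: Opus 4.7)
The approach is to combine the interpolation identities Theorem \ref{realintH1Lp2}, Corollary \ref{realintBMOLq2}, and Corollary \ref{realintH1BMO} with the reiteration theorem \cite[Theorem 3.5.3]{BL} and the classical identity $[L^{r_0},L^{r_1}]_{\lambda,q}=L^{p,q}$ (for $r_0\ne r_1$ and $1/p=(1-\lambda)/r_0+\lambda/r_1$) from \cite[Theorem 5.3.1]{BL}. Lorentz spaces $L^{p,q}$ with $q\ne p$ appear on the right-hand side precisely because the outer fine index of the reiteration is entirely free.

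For part (iii) I fix $\theta_0,\theta_1\in(0,1)$ with $\theta_0\ne\theta_1$ and $\lambda\in(0,1)$ satisfying $\theta=(1-\lambda)\theta_0+\lambda\theta_1$, and set $1/p_i=1-\theta_i$, so $p_0\ne p_1$. Corollary \ref{realintH1BMO} identifies $L^{p_i}=[H^1,BMO]_{\theta_i,p_i}$; the classical Lorentz identity gives $L^{p,q}=[L^{p_0},L^{p_1}]_{\lambda,q}$ with $1/p=1-\theta$; and \cite[Theorem 3.5.3]{BL} collapses the iterated interpolation to
\[
L^{p,q}=\bigl[[H^1,BMO]_{\theta_0,p_0},[H^1,BMO]_{\theta_1,p_1}\bigr]_{\lambda,q}=[H^1,BMO]_{\theta,q}.
\]
Part (i) is proved by exactly the same scheme starting from Theorem \ref{realintH1Lp2} with $1/r_i=1-\theta_i+\theta_i/p_1$; a single reiteration already gives $[H^1,L^{p_1}]_{\theta,q}=L^{p,q}$, which is (i) in the special case $q_1=p_1$. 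To reach arbitrary $q_1\in[1,\infty]$ I would realise $L^{p_1,q_1}$ itself as a real-interpolation space of Lebesgue spaces, namely $L^{p_1,q_1}=[L^a,L^b]_{\sigma,q_1}$ for suitable $1<a<p_1<b\le\infty$ with $1/p_1=(1-\sigma)/a+\sigma/b$, and then apply Wolff's reiteration theorem \cite[Theorem 1]{W}---in the same spirit as in the proof of Proposition \ref{realintL1BMO}---to collapse the resulting iterated interpolation $[H^1,[L^a,L^b]_{\sigma,q_1}]_{\theta,q}$ back to a single pair already handled, yielding $[H^1,L^{p_1,q_1}]_{\theta,q}=L^{p,q}$. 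Part (ii) follows either by the same scheme starting from Corollary \ref{realintBMOLq2}, or by dualising (i) via Lemma \ref{dense} and the duality theorem \cite[Theorem 3.7.1]{BL}, using $(H^1)'=BMO$, $(L^{p_1,q_1})'=L^{p_1',q_1'}$, $(L^{p,q})'=L^{p',q'}$ and the symmetry $[A,B]_{\theta,q}=[B,A]_{1-\theta,q}$.

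The main obstacle is the auxiliary Lorentz parameter in the second slot ($q_1$ in (i), $s_1$ in (ii)): the base identities Theorem \ref{realintH1Lp2} and Corollary \ref{realintBMOLq2} are stated only for Lebesgue spaces there, so covering the full Lorentz range forces the extra Wolff reiteration step described above (or, alternatively, a direct $K$-functional estimate modelled on Lemma \ref{intpinfty} but refined to produce $L^{p_1,q_1}$-bounds on the ``good'' part of the decomposition). Once this step is in place, the three identities (i)--(iii) follow uniformly from the same reiteration template.
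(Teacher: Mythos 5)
Your overall scheme---reiterating Theorem \ref{realintH1Lp2}, Corollary \ref{realintBMOLq2} and Corollary \ref{realintH1BMO} against the classical identity $[L^{r_0},L^{r_1}]_{\lambda,q}=L^{p,q}$ of \cite[Theorem 5.3.1]{BL}---is exactly what the paper intends (it gives no proof beyond the sentence ``by applying the reiteration theorem''), and your arguments for (iii) and for (i) in the Lebesgue case $q_1=p_1$ are correct, including the exponent arithmetic.

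The step that does not work as written is the appeal to Wolff's theorem to absorb the auxiliary Lorentz index $q_1$ (resp.\ $s_1$). Wolff's theorem \cite[Theorem 1]{W} does not ``collapse'' an iterated interpolation of the form $[H^1,[L^a,L^b]_{\sigma,q_1}]_{\theta,q}$; it derives identities for two intermediate spaces relative to the outer couple from an overlapping pair of already known identities, and in the configuration you describe one of the two required hypotheses would be precisely the unknown identity for the couple $(H^1,L^{p_1,q_1})$ (unless you force one inner space to be a Lebesgue space, in which case the argument is just a roundabout form of ordinary reiteration). The standard reiteration theorem \cite[Theorem 3.5.3]{BL} already does the job: from the case you have proved, $L^{p_1,q_1}=[H^1,L^{\infty}]_{\theta_1,q_1}$ with $1/p_1=1-\theta_1$, so $L^{p_1,q_1}$ is of class $\mathcal{C}(\theta_1)$ and $H^1$ of class $\mathcal{C}(0)$ for the couple $(H^1,L^{\infty})$, whence
\begin{equation*}
[H^1,L^{p_1,q_1}]_{\theta,q}=[H^1,L^{\infty}]_{\theta\theta_1,q}=L^{p,q},\qquad 1-\theta\theta_1=1-\theta+\tfrac{\theta}{p_1},
\end{equation*}
which is (i) in full. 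The same device with the couple $(H^1,BMO)$, writing $L^{q_1,s_1}=[H^1,BMO]_{\psi_1,s_1}$ (the reiteration proving (iii) is valid for every outer fine index) and using that $BMO$ is of class $\mathcal{C}(1)$, gives (ii) in full. Note also that your duality alternative for (ii) cannot cover the endpoint fine indices (e.g.\ $s=\infty$ or $s_1\in\{1,\infty\}$), since \cite[Theorem 3.7.1]{BL} requires a finite fine index and the Lorentz duality $(L^{p_1,q_1})'=L^{p_1',q_1'}$ fails at $q_1=\infty$; so the reiteration route above should be taken as the actual proof.
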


%{\bf{Remark.}} The complex interpolation involving $H^1$ and $BMO$ will be the object of further investigation. It seems to be difficult to generalize the classical complex interpolation results (see \cite{FeS, JJ, S}), having no maximal characterization of the Hardy space. 

%faccio la bibliografia

\end{document}